\newtheorem{theorem}{Theorem}
\newtheorem{lemma}{Lemma}
\newtheorem{example}{Example}
\newtheorem{proposition}{Proposition}
\numberwithin{equation}{section}
\numberwithin{example}{section}
\begin{document}

\date{}
\title{Preconditioned TBiCOR and TCORS Algorithms for Solving the Sylvester Tensor Equation}
\author{Guang-Xin Huang$^{1}$\thanks{Emails: huangx@cdut.edu.cn (G.X. Huang), qixinggenius@163.com (Q.X. Chen), fyin@suse.edu.cn (F. Yin)}, Qi-Xing Chen$^{1}$, Feng Yin$^{2}$ \mbox{}\\
   \emph{\small 1. College of Mathematics and Physics, Chengdu University of Technology, P.R.China}\\
   \emph{\small 2. College of Mathematics and statistics, Sichuan University of Science and Engineering, P.R.China}\\}
\maketitle
\renewcommand{\abstractname}{}
\begin{onecolabstract}

\noindent{$\bm{Abstract.}$
 In this paper, the preconditioned TBiCOR and TCORS methods are presented for solving the Sylvester tensor equation. A tensor Lanczos $\mathcal{L}$-Biorthogonalization algorithm (TLB) is derived for solving the Sylvester tensor equation. Two improved TLB methods are presented. One is the biconjugate $\mathcal{L}$-orthogonal residual algorithm in tensor form (TBiCOR), which implements the $LU$ decomposition for the triangular coefficient matrix derived by the TLB method. The other is the conjugate $\mathcal{L}$-orthogonal residual squared algorithm in tensor form (TCORS), which introduces a square operator to the residual of the TBiCOR algorithm. A preconditioner based on the nearest Kronecker product is used to accelerate the TBiCOR and TCORS algorithms, and we obtain the preconditioned TBiCOR algorithm (PTBiCOR) and preconditioned TCORS algorithm (PTCORS). The proposed algorithms are proved to be convergent within finite steps of iteration without roundoff errors. Several examples illustrate that the preconditioned TBiCOR and TCORS algorithms present excellent convergence.}

\noindent{$\bm{Key words.}$ TLB, TBiCOR, TCORS, Sylvester tensor equation, Preconditioner}

\end{onecolabstract}

\section{Introduction}\label{sec1}
This paper is concerned of the computation of the Sylvester tensor equation of the form
\begin{equation}\label{eq1.1}
\mathcal{X}\times_{1}\textbf{A}_{1}+\mathcal{X}\times_{2}\textbf{A}_{2}+...+\mathcal{X}\times_{N}\textbf{A}_{N}=\mathcal{D},
\end{equation}
where matrices $\textbf{A}_{n}\in\mathbb{R}^{I_{n}\times I_{n}}(n=1,2,...,N)$ and the tensor $\mathcal{D}\in\mathbb{R}^{I_{1}\times I_{2}\times...\times I_{N}}$ are given, and the tensor $\mathcal{X}\in\mathbb{R}^{I_{1}\times I_{2}\times...\times I_{N}}$ is unknown.

The Sylvester tensor equation \eqref{eq1.1} plays vital roles in many fields such as image processing \cite{DL96}, blind source separation \cite{NCC15} and the situation when we describes a chain of spin particles \cite{MMT17}.
If $N=2$, then \eqref{eq1.1} can be reduced to the Sylvester matrix equation
\begin{equation}\label{eq1.2}
\textbf{A}\textbf{X}+\textbf{X}\textbf{B}^{T}=\textbf{D},
\end{equation}
which has many applications in system and control theory \cite{DC05,2DC05,HSF79}.
When $N=3$, \eqref{eq1.1} becomes
\begin{equation}\label{eq1.3}
\mathcal{X}\times_{1}\textbf{A}_{1}+\mathcal{X}\times_{2}\textbf{A}_{2}+\mathcal{X}\times_{3}\textbf{A}_{3}=\mathcal{D},
\end{equation}
which often arises from the finite element \cite{L04}, finite difference \cite{ZHK03} and spectral methods \cite{LSZ09}.

Many approaches are constructed to solve the Sylvester tensor equation \eqref{eq1.1} in recent years. Chen and Lu \cite{CL12} proposed the GMRES method based on a tensor format for solving \eqref{eq1.1} and presented the gradient based iterative algorithms \cite{CL13} for solving \eqref{eq1.3}. Beik et al. \cite{MAK19} also presented some global iterative schemes based on Hessenberg process to solve the Sylvester tensor equation \eqref{eq1.3}.
Beik et al. \cite{AML20} solved the Sylvester tensor equation \eqref{eq1.1} with severely ill-conditioned coefficient matrices and considered its application in color image restoration. Heyouni et al. \cite{MFA20} proposed a general framework by using tensor Krylov projection techniques to solve high order the Sylvester tensor equation \eqref{eq1.1}. Beik et al. \cite{AMA16} proposed the Arnoldi process and full orthogonalization method in tensor form, and the conjugate gradient and nested conjugate gradient algorithms in tensor form to solve the Sylvester tensor equation \eqref{eq1.1}. When $\mathcal{D}$ in \eqref{eq1.1} is a tensor with low rank, Bentbib et al. \cite{ASM20} proposed Arnoldi-based block and global methods. Kressner and Tobler \cite{DC11} developed Krylov subspace methods based on extended Arnoldi process for solving the system of equation \eqref{eq1.1}. The perturbation bounds and backward error are presented in \cite{SWL13} for solving \eqref{eq1.1}. For more methods on other linear systems in tensor form we refer to \cite{JL13,HM20,DC10,LM20}. Using the nearest Kronecker product (NKP) in \cite{VP93}, Chen and Lu \cite{CL12} presented an efficient preconditioner for solving Eq.\eqref{eq1.1} based on GMRES in tensor form. Very recently Zhang and Wang in \cite{ZW21} gave a preconditioned BiCG (PBiCG) and a preconditioned BiCR (PBiCR) based on the nearest Kronecker product (NKP) in \cite{VP93}.

Inspired by the Lanczos biorthogonalization (LB) algorithm in \cite{Y03}, BICOR and CORS methods in \cite{CJH11} for non-symmetric linear equation, in this paper, we present two improved Lanczos $\mathcal{L}$-orthogonal algorithms in tensor form for solving the Sylvester tensor equation \eqref{eq1.1}. We further present preconditioned TBiCOR (PTLB) and preconditioned TCORS (TCORS) algorithms by using the NKP preconditioner in \cite{CL12} for solving Eq \eqref{eq1.1}. The preconditioned LB in tensor form (PTLB) is also considered.

The rest of this paper is organized as follows. Section \ref{sec2} reviews some related symbols, concepts and lemmas that will be used in the contexture.
Section \ref{sec3} presents a tensor Lanczos $\mathcal{L}$-biorthogonalization algorithm (TLB) and two improved TLB methods are shown in section \ref{sec4}. The tensor biconjugate $\mathcal{L}$-orthogonal residual(TBiCOR) and tensor conjugate $\mathcal{L}$-orthogonal residual squared(TCORS) algorithms for solving the tensor equation \eqref{eq1.1} are presented in subsections \ref{sec4.1} and \ref{sec4.2}, respectively. The convergence of the TBiCOR and TCORS methods are proved. Section \ref{sec5} presents the preconditioned TLB, TBiCOR and TCORS algorithms and the convergence of the preconditioned TBiCOR and TCORS algorithms. Section \ref{sec6} presents several examples and some conclusions are drawn in section \ref{sec7}.

\section{Preliminaries}\label{sec2}
The notations and definitions as follows are needed. For a positive integer $N$, an $N$-way or $N$th-order tensor $\mathcal{X}=(x_{i_{1}}..._{i_{N}})$ is a multidimensional array with $I_{1}I_{2}...I_{N}$ entries, where $1\leq i_{j}\leq I_{j},j=1,...,N$. $\mathbb{R}^{I_{1}\times...\times I_{N}}$ denotes the set of the $N$th-order $I_{1}\times...\times I_{N}$ dimension tensors over the real field $\mathbb{R}$, while $\mathbb{C}^{I_{1}\times...\times I_{N}}$ defines the set of the $N$th-order $I_{1}\times...\times I_{N}$ dimension tensors over the complex field $\mathbb{C}$.

Let $\mathcal{X}\times_{n}\textbf{A}$ define the $n$-mode (matrix) product of a tensor $\mathcal{X}\in\mathbb{R}^{I_{1}\times I_{2}\times...\times I_{N}}$ with a matrix $\textbf{A}\in\mathbb{R}^{J\times I_{n}}$, i.e.,
\begin{displaymath}
(\mathcal{X}\times_{n}\textbf{A})_{i_{1}...i_{n-1}ji_{n+1}...i_{N}}=\sum_{i_{n}=1}^{I_{n}}x_{i_{1}i_{2}...i_{N}}a_{ji_{n}}.
\end{displaymath}
The $n$-mode (vector) product of a tensor $\mathcal{X}\in\mathbb{R}^{I_{1}\times I_{2}\times...\times I_{N}}$ and a vector $\textbf{v}\in\mathbb{R}^{I_{n}}$ is denoted by $\mathcal{X}\overline{\times}_{n}\textbf{v}$, i.e.,
\begin{displaymath}
(\mathcal{X}\overline{\times}_{n}\textbf{v})_{i_{1}...i_{n-1}i_{n+1}...i_{N}}=\sum_{i_{n}=1}^{I_{n}}x_{i_{1}i_{2}...i_{N}}\textbf{v}_{i_{n}}.
\end{displaymath}
The inner product of $\mathcal{X}, \mathcal{Y}\in\mathbb{R}^{I_{1}\times I_{2}\times...\times I_{N}}$ is defined by
\begin{displaymath}
\langle\mathcal{X},\mathcal{Y}\rangle=\sum_{i_{1}=1}^{I_{1}}\sum_{i_{2}=1}^{I_{2}}...\sum_{i_{N}=1}^{I_{N}}x_{i_{1}i_{2}...i_{N}}y_{i_{1}i_{2}...i_{N}},
\end{displaymath}
and the norm of $\mathcal{X}$ is denoted by
\begin{displaymath}
\|\mathcal{X}\|=\sqrt{\langle\mathcal{X},\mathcal{X}\rangle}.
\end{displaymath}
Furthermore, it derives from \cite{CL12} that
\begin{equation}\label{eq2.1}
\langle\mathcal{X},\mathcal{Y}\times_{n}\textbf{A}\rangle=\langle\mathcal{X}\times_{n}\textbf{A}^{T},\mathcal{Y}\rangle.
\end{equation}
We refer more notions and definitions in \cite{TB09}.

The following results from \cite{AMA16} will be used later.
\begin{lemma}\label{lem2.1}
Suppose $\textbf{A}\in\mathbb{R}^{J_{n}\times I_{n}}$, $\textbf{y}\in\mathbb{R}^{J_{n}}$ and $\mathcal{X}\in\mathbb{R}^{I_{1}\times I_{2}\times...\times I_{N}}$, we have
\begin{equation}
\mathcal{X}\times_{n}\textbf{A}\overline{\times}_{n}\textbf{e}=\mathcal{X}\overline{\times}_{n}(\textbf{A}^{T}\textbf{y}).
\end{equation}
\end{lemma}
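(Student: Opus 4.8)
The plan is to establish the identity entrywise, since both sides are tensors living in the same space $\mathbb{R}^{I_{1}\times\cdots\times I_{n-1}\times I_{n+1}\times\cdots\times I_{N}}$ (the $n$-th mode being contracted away on each side). First I would fix an arbitrary multi-index $(i_{1},\ldots,i_{n-1},i_{n+1},\ldots,i_{N})$ and expand the left-hand side. By the definition of the $n$-mode matrix product, $\mathcal{X}\times_{n}\textbf{A}$ has entries $\sum_{i_{n}=1}^{I_{n}}x_{i_{1}\ldots i_{N}}a_{ji_{n}}$ and lives in a space whose $n$-th mode now has dimension $J_{n}$. Applying the $n$-mode vector product with the vector from the hypothesis, which therefore must be taken in $\mathbb{R}^{J_{n}}$, then contracts this $J_{n}$-dimensional index.

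The key step is to write out the resulting double sum and exchange the order of summation:
\begin{displaymath}
\sum_{j=1}^{J_{n}}\Big(\sum_{i_{n}=1}^{I_{n}}x_{i_{1}\ldots i_{N}}\,a_{ji_{n}}\Big)\textbf{y}_{j}
=\sum_{i_{n}=1}^{I_{n}}x_{i_{1}\ldots i_{N}}\Big(\sum_{j=1}^{J_{n}}a_{ji_{n}}\,\textbf{y}_{j}\Big).
\end{displaymath}
Recognizing the inner sum as $\sum_{j=1}^{J_{n}}a_{ji_{n}}\textbf{y}_{j}=(\textbf{A}^{T}\textbf{y})_{i_{n}}$, since $(\textbf{A}^{T})_{i_{n}j}=a_{ji_{n}}$, this collapses to $\sum_{i_{n}=1}^{I_{n}}x_{i_{1}\ldots i_{N}}(\textbf{A}^{T}\textbf{y})_{i_{n}}$, which is precisely the entry of $\mathcal{X}\,\overline{\times}_{n}(\textbf{A}^{T}\textbf{y})$ at the chosen multi-index. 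As the multi-index was arbitrary, the two tensors agree entrywise and hence coincide.

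I do not expect a genuine obstacle here; the statement is a routine consequence of the definitions together with a Fubini-type rearrangement of a finite double sum. The only point demanding care is the bookkeeping of the mode dimensions: after forming $\mathcal{X}\times_{n}\textbf{A}$ the $n$-th mode has size $J_{n}$ rather than $I_{n}$, so the contracting vector must lie in $\mathbb{R}^{J_{n}}$, and the transpose $\textbf{A}^{T}$ is exactly what reconciles this with the direct $n$-mode vector product on the original $I_{n}$-dimensional mode. (I read the $\textbf{e}$ appearing in the displayed statement as a typographical slip for the vector $\textbf{y}$ introduced in the hypothesis.)
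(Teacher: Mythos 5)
Your argument is correct: expanding both sides entrywise and exchanging the order of the finite double sum is exactly the standard verification, and your identification of $\sum_{j}a_{ji_{n}}\textbf{y}_{j}$ with $(\textbf{A}^{T}\textbf{y})_{i_{n}}$ is the whole content of the lemma. The paper itself gives no proof --- it imports the result from the cited reference --- so there is nothing to diverge from; your reading of the stray $\textbf{e}$ in the displayed formula as a typographical slip for $\textbf{y}$ is also the right one, and your remark about the mode dimension changing from $I_{n}$ to $J_{n}$ after applying $\times_{n}\textbf{A}$ is the one bookkeeping point worth recording.
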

\begin{lemma}\label{lem2.2}
If $\mathcal{X}\in\mathbb{R}^{I_{1}\times I_{2}\times...\times I_{N}}$, then
\begin{equation}
\mathcal{X}\overline{\times}_{N}\textbf{e}_{j}=\mathcal{X}_{j}, j=1,2,...,I_{N},
\end{equation}
where $\textbf{e}_{j}$ is the $j$-th column of the $I_{N}$-order identity matrix $\textbf{E}_{I_{N}}$, and $\mathcal{X}_{j}$ denotes the $j$-th frontal slice of $\mathcal{X}$.
\end{lemma}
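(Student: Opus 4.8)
The plan is to verify the identity entrywise, directly from the definition of the $n$-mode vector product stated above. Since both sides of the claimed equality are $(N-1)$th-order tensors lying in $\mathbb{R}^{I_{1}\times\dots\times I_{N-1}}$, it suffices to show that their entries agree for every multi-index $(i_{1},\dots,i_{N-1})$, after which the conclusion for each $j=1,2,\dots,I_{N}$ follows.

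First I would specialize the defining formula
\begin{displaymath}
(\mathcal{X}\overline{\times}_{n}\textbf{v})_{i_{1}\dots i_{n-1}i_{n+1}\dots i_{N}}=\sum_{i_{n}=1}^{I_{n}}x_{i_{1}i_{2}\dots i_{N}}\textbf{v}_{i_{n}}
\end{displaymath}
to the case $n=N$ and $\textbf{v}=\textbf{e}_{j}$, which gives
\begin{displaymath}
(\mathcal{X}\overline{\times}_{N}\textbf{e}_{j})_{i_{1}\dots i_{N-1}}=\sum_{i_{N}=1}^{I_{N}}x_{i_{1}i_{2}\dots i_{N}}(\textbf{e}_{j})_{i_{N}}.
\end{displaymath}
The key observation is that $\textbf{e}_{j}$, being the $j$-th column of the identity matrix $\textbf{E}_{I_{N}}$, has entries $(\textbf{e}_{j})_{i_{N}}=\delta_{i_{N}j}$, so only the term with $i_{N}=j$ survives and the summation collapses to the single value $x_{i_{1}\dots i_{N-1}j}$.

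It then remains to match this surviving entry with the definition of the frontal slice: by convention the $j$-th frontal slice $\mathcal{X}_{j}$ is the $(N-1)$th-order tensor obtained by fixing the last index of $\mathcal{X}$ at the value $j$, i.e. $(\mathcal{X}_{j})_{i_{1}\dots i_{N-1}}=x_{i_{1}\dots i_{N-1}j}$. Comparing the two expressions entry by entry over all $(i_{1},\dots,i_{N-1})$ yields $\mathcal{X}\overline{\times}_{N}\textbf{e}_{j}=\mathcal{X}_{j}$, as claimed. I do not expect any genuine obstacle here; the argument is a one-line collapse of a Kronecker-delta sum, and the only point requiring care is keeping the index bookkeeping consistent, so that the index deleted by the mode-$N$ contraction is exactly the one pinned down in the frontal-slice convention.
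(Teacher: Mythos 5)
Your proof is correct. Note that the paper itself gives no proof of this lemma --- it is quoted from the reference [Beik et al., \emph{Numer.\ Linear Algebr.}\ 23 (2016)] without argument --- so there is no in-paper derivation to compare against; your entrywise computation, collapsing the sum $\sum_{i_N}x_{i_1\dots i_N}(\textbf{e}_j)_{i_N}$ via $(\textbf{e}_j)_{i_N}=\delta_{i_N j}$ and matching the surviving entry with the frontal-slice convention $(\mathcal{X}_j)_{i_1\dots i_{N-1}}=x_{i_1\dots i_{N-1}j}$, is exactly the standard one-line argument and is complete.
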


For $\mathcal{X}\in\mathbb{R}^{I_{1}\times I_{2}\times...\times I_{N-1}\times I_{N}}$ and $\mathcal{Y}\in\mathbb{R}^{I_{1}\times I_{2}\times...\times I_{N-1}\times I_{\widehat{N}}}$, let $\mathcal{X}\boxtimes^{(N)}\mathcal{Y}\in\mathbb{R}^{I_{N}\times I_{\widehat{N}}}$ define the $\boxtimes^{(N)}$-product of $\mathcal{X}$ and $\mathcal{Y}$, i.e.,
\begin{displaymath}
[\mathcal{X}\boxtimes^{(N)}\mathcal{Y}]_{i,j}=trace(\mathcal{X}_{::...:i}\boxtimes^{(N-1)}\mathcal{Y}_{::...:j}), N=2,3,....
\end{displaymath}
In particular, $\mathcal{X}\boxtimes^{1}\mathcal{Y}=\mathcal{X}^{T}\mathcal{Y}$ for $\mathcal{X}\in\mathbb{R}^{I_{1}}$ and $\mathcal{Y}\in\mathbb{R}^{I_{1}}$.
For any $\mathcal{X}$, $\mathcal{Y}\in\mathbb{R}^{I_{1}\times I_{2}\times...\times I_{N}}$, a straightforward computation results in
\begin{equation}\label{eq0.24}
\langle\mathcal{X}, \mathcal{Y}\rangle=trace(\mathcal{X}\boxtimes^{(N)}\mathcal{Y}), N=1,2,...,
\end{equation}
and (\ref{eq0.24}) can be represented as
\begin{equation}\label{eq2.5}
\mathcal{X}\boxtimes^{({N+1})}\mathcal{Y}=trace(\mathcal{X}\boxtimes^{(N)}\mathcal{Y}).
\end{equation}
Therefore we have
\begin{equation}\label{eq2.6}
\|\mathcal{X}\|^{2}=\langle\mathcal{X}, \mathcal{X}\rangle=\mathcal{X}\boxtimes^{(N+1)}\mathcal{X}.
\end{equation}

We also need the following results.
\begin{lemma}\label{lem2.3}(\cite{AMA16})
Let $\mathcal{Y}\in\mathbb{R}^{I_{1}\times I_{2}\times...\times I_{N}\times m}$ be an ($N+1$)-order tensor with column tensors $\mathcal{Y}_{1}, \mathcal{Y}_{2},..., \mathcal{Y}_{m}\in\mathbb{R}^{I_{1}\times I_{2}\times...\times I_{N}}$ and vector $\textbf{z}\in\mathbb{R}^{m}$. For any ($N+1$)-order tensor $\mathcal{X}$ with $N$-order column tensors $\mathcal{X}_{1}, \mathcal{X}_{2},..., \mathcal{X}_{m}$, it holds that
\begin{equation}
\mathcal{X}\boxtimes^{(N+1)}(\mathcal{Y}\overline{\times}_{(N+1)}\textbf{z})=(\mathcal{X}\boxtimes^{(N+1)}\mathcal{Y})\textbf{z}.
\end{equation}
\end{lemma}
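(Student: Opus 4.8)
The plan is to reduce both sides of the claimed identity to the same explicit entrywise expression, the essential ingredient being a formula that relates the $\boxtimes^{(N+1)}$-product to the ordinary tensor inner product of column tensors.

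First I would establish that for any two $(N+1)$-order tensors $\mathcal{U}$ and $\mathcal{V}$ sharing their first $N$ modes, the $(i,j)$ entry of their product equals the inner product of the corresponding column tensors, namely
\[
[\mathcal{U}\boxtimes^{(N+1)}\mathcal{V}]_{i,j}=\langle\mathcal{U}_{i},\mathcal{V}_{j}\rangle .
\]
This follows at once from the recursive definition $[\mathcal{U}\boxtimes^{(N+1)}\mathcal{V}]_{i,j}=trace(\mathcal{U}_{i}\boxtimes^{(N)}\mathcal{V}_{j})$ applied to the $N$-order frontal slices $\mathcal{U}_{i}$ and $\mathcal{V}_{j}$, combined with identity \eqref{eq0.24}, which identifies $trace(\mathcal{U}_{i}\boxtimes^{(N)}\mathcal{V}_{j})$ with $\langle\mathcal{U}_{i},\mathcal{V}_{j}\rangle$.

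Next I would rewrite the contraction $\mathcal{Y}\overline{\times}_{(N+1)}\textbf{z}$ in terms of the column tensors of $\mathcal{Y}$. By the definition of the $(N+1)$-mode vector product, its entries are $(\mathcal{Y}\overline{\times}_{(N+1)}\textbf{z})_{i_{1}\cdots i_{N}}=\sum_{k=1}^{m}y_{i_{1}\cdots i_{N}k}z_{k}$, so that $\mathcal{Y}\overline{\times}_{(N+1)}\textbf{z}=\sum_{k=1}^{m}z_{k}\mathcal{Y}_{k}$, an $N$-order tensor which, regarded as a single column tensor, turns the left-hand side into a column vector of length $m$. Using the entrywise formula of the first step together with the bilinearity of $\langle\cdot,\cdot\rangle$, its $i$-th entry is
\[
\Big[\mathcal{X}\boxtimes^{(N+1)}(\mathcal{Y}\overline{\times}_{(N+1)}\textbf{z})\Big]_{i}=\Big\langle\mathcal{X}_{i},\sum_{k=1}^{m}z_{k}\mathcal{Y}_{k}\Big\rangle=\sum_{k=1}^{m}z_{k}\langle\mathcal{X}_{i},\mathcal{Y}_{k}\rangle .
\]

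Finally I would expand the $i$-th entry of the right-hand side directly, obtaining
\[
\Big[(\mathcal{X}\boxtimes^{(N+1)}\mathcal{Y})\textbf{z}\Big]_{i}=\sum_{k=1}^{m}[\mathcal{X}\boxtimes^{(N+1)}\mathcal{Y}]_{i,k}z_{k}=\sum_{k=1}^{m}\langle\mathcal{X}_{i},\mathcal{Y}_{k}\rangle z_{k},
\]
which agrees with the expression found for the left-hand side, so the identity holds entrywise and the lemma follows. The computation is otherwise routine; the only point demanding care is the first step, namely reconciling the recursively-defined $\boxtimes$-product with the inner product so that bilinearity may legitimately be invoked, and correctly interpreting $\boxtimes^{(N+1)}$ acting on the reduced $N$-order tensor as producing a single-column (vector) result rather than a matrix.
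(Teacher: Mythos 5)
Your proof is correct. The paper does not prove Lemma \ref{lem2.3} at all --- it is quoted from \cite{AMA16} without argument --- so there is no internal proof to compare against; your entrywise reduction, via the identity $[\mathcal{U}\boxtimes^{(N+1)}\mathcal{V}]_{i,j}=\langle\mathcal{U}_{i},\mathcal{V}_{j}\rangle$ obtained from the recursive definition of $\boxtimes^{(N+1)}$ together with \eqref{eq0.24}, and the expansion $\mathcal{Y}\overline{\times}_{(N+1)}\textbf{z}=\sum_{k}z_{k}\mathcal{Y}_{k}$, is the standard and essentially the only natural way to verify the identity, and you are right to flag the single point of care, namely reading $\boxtimes^{(N+1)}$ against the contracted $N$-order tensor as producing an $m\times 1$ column rather than a square matrix.
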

\begin{lemma}\label{lem2.4}(\cite{MFA20})
Let $\mathcal{X}\in\mathbb{R}^{I_{1}\times I_{2}\times...\times I_{N-1}\times I_{N}}$ and $\mathcal{Y}\in\mathbb{R}^{I_{1}\times I_{2}\times...\times I_{N-1}\times I_{\widehat{N}}}$ be $N$-order tensors with column tensors $\mathcal{X}_{i}(i=1,...,I_{N})$ and $\mathcal{Y}_{j}(j=1,...,I_{\widehat{N}})$. For $\textbf{A}\in\mathbb{R}^{I_{N}\times I_{N}}$ and $\textbf{B}\in\mathbb{R}^{I_{\widehat{N}}\times I_{\widehat{N}}}$, we have
\begin{equation}\label{leq2.5}
(\mathcal{X}\times_{N}\textbf{A}^{T})\boxtimes^{(N)}(\mathcal{Y}\times_{N}\textbf{B}^{T})=\textbf{A}^{T}(\mathcal{X}\boxtimes^{(N)}\mathcal{Y})\textbf{B}.
\end{equation}
\end{lemma}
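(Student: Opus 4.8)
The plan is to prove the identity entrywise, reducing the $\boxtimes^{(N)}$-product to ordinary inner products of column tensors and then exploiting the bilinearity of $\langle\cdot,\cdot\rangle$. The key preliminary observation is that, combining the recursive definition of $\boxtimes^{(N)}$ with \eqref{eq0.24} applied at order $N-1$, the $(i,j)$ entry of the $\boxtimes^{(N)}$-product is exactly the inner product of the corresponding column tensors:
\[
[\mathcal{X}\boxtimes^{(N)}\mathcal{Y}]_{i,j}=trace(\mathcal{X}_{i}\boxtimes^{(N-1)}\mathcal{Y}_{j})=\langle\mathcal{X}_{i},\mathcal{Y}_{j}\rangle .
\]
This turns the whole statement into a claim about inner products of column tensors, so that no further induction on $N$ is required; \eqref{eq0.24} absorbs the recursion hidden in the definition of $\boxtimes^{(N)}$.

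Next I would describe how the mode-$N$ product acts on column tensors. Writing out the definition of $\times_{N}$ with the matrix $\textbf{A}^{T}$, the $(i_{1},\dots,i_{N-1},j)$ entry of $\mathcal{X}\times_{N}\textbf{A}^{T}$ equals $\sum_{k}x_{i_{1}\dots i_{N-1}k}(\textbf{A}^{T})_{jk}=\sum_{k}(\mathcal{X}_{k})_{i_{1}\dots i_{N-1}}\,a_{kj}$, so the $j$-th column tensor of $\mathcal{X}\times_{N}\textbf{A}^{T}$ is the linear combination $\sum_{k=1}^{I_{N}}a_{kj}\mathcal{X}_{k}$. In the same way the $l$-th column tensor of $\mathcal{Y}\times_{N}\textbf{B}^{T}$ is $\sum_{m=1}^{I_{\widehat{N}}}b_{ml}\mathcal{Y}_{m}$.

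I would then assemble the $(j,l)$ entry of the left-hand side. Using the first step followed by bilinearity of the inner product,
\[
[(\mathcal{X}\times_{N}\textbf{A}^{T})\boxtimes^{(N)}(\mathcal{Y}\times_{N}\textbf{B}^{T})]_{j,l}=\Big\langle\sum_{k}a_{kj}\mathcal{X}_{k},\ \sum_{m}b_{ml}\mathcal{Y}_{m}\Big\rangle=\sum_{k,m}a_{kj}\,\langle\mathcal{X}_{k},\mathcal{Y}_{m}\rangle\,b_{ml}.
\]
Recognizing $\langle\mathcal{X}_{k},\mathcal{Y}_{m}\rangle=[\mathcal{X}\boxtimes^{(N)}\mathcal{Y}]_{k,m}$ and $a_{kj}=(\textbf{A}^{T})_{jk}$, the right-hand side is precisely the $(j,l)$ entry of $\textbf{A}^{T}(\mathcal{X}\boxtimes^{(N)}\mathcal{Y})\textbf{B}$, which completes the argument.

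I expect the only real obstacle to be bookkeeping rather than anything conceptual: one must track carefully that the mode-$N$ product with $\textbf{A}^{T}$ weights the column tensors by the \emph{columns} of $\textbf{A}$ (coefficient $a_{kj}$, not $a_{jk}$), so that the transpose lands on the correct factor and the two summation indices contract as the matrix product $\textbf{A}^{T}(\cdot)\textbf{B}$ rather than a transposed variant. Since the base identity $\langle\mathcal{X},\mathcal{Y}\rangle=trace(\mathcal{X}\boxtimes^{(N)}\mathcal{Y})$ is already supplied by \eqref{eq0.24}, this index alignment in the final contraction is the single place where care is genuinely needed.
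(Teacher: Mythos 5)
Your argument is correct: the identification $[\mathcal{X}\boxtimes^{(N)}\mathcal{Y}]_{i,j}=\langle\mathcal{X}_{i},\mathcal{Y}_{j}\rangle$ follows from \eqref{eq0.24} applied at order $N-1$, the $j$-th column tensor of $\mathcal{X}\times_{N}\textbf{A}^{T}$ is indeed $\sum_{k}a_{kj}\mathcal{X}_{k}$, and the final contraction lines up entrywise with $\textbf{A}^{T}(\mathcal{X}\boxtimes^{(N)}\mathcal{Y})\textbf{B}$. Note that the paper itself gives no proof of this lemma --- it is quoted from \cite{MFA20} --- so there is nothing internal to compare against; your entrywise derivation is a valid, self-contained justification of the stated identity.
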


\section{A Tensor Lanczos $\mathcal{L}$-Biorthogonalization Algorithm}\label{sec3}
Define the linear operator of the form
\begin{equation}\label{loper1}
\begin{split}
\mathcal{L}:\mathbb{R}^{I_{1}\times I_{2}\times...\times I_{N}}\rightarrow\mathbb{R}^{I_{1}\times I_{2}\times...\times I_{N}},\\
\mathcal{X}\mapsto \mathcal{L}(\mathcal{X}):=\mathcal{X}\times_{1}\textbf{A}_{1}+\mathcal{X}\times_{2}\textbf{A}_{2}+...+\mathcal{X}\times_{N}\textbf{A}_{N},
\end{split}
\end{equation}
then the Sylvester tensor equation \eqref{eq1.1} can be represented as
\begin{equation}
\mathcal{L}(\mathcal{X})=\mathcal{D}.
\end{equation}
Let $\mathcal{L}^{T}$ define the dual linear operator of $\mathcal{L}$, i.e.,
\begin{equation}
\begin{split}
\mathcal{L}^{T}:\mathbb{R}^{I_{1}\times I_{2}\times...\times I_{N}}\rightarrow\mathbb{R}^{I_{1}\times I_{2}\times...\times I_{N}},\\
\mathcal{X}\mapsto \mathcal{L}^{T}(\mathcal{X}):=\mathcal{X}\times_{1}\textbf{A}_{1}^{T}+\mathcal{X}\times_{2}\textbf{A}_{2}^{T}+...+\mathcal{X}\times_{N}\textbf{A}_{N}^{T},
\end{split}
\end{equation}
then it holds that $\langle \mathcal{L}(\mathcal{X}), \mathcal{Y}\rangle=\langle \mathcal{X}, \mathcal{L}^{T}(\mathcal{Y})\rangle$
for any $\mathcal{X},\mathcal{Y}\in\mathbb{R}^{I_{1}\times I_{2}\times...\times I_{N}}$.
Define the Krylov subspaces in tensor form as follows:
\begin{equation}
\mathcal{K}_{m}(\mathcal{L},\mathcal{V}_{1})=span\{\mathcal{V}_{1},\mathcal{L}(\mathcal{V}_{1}),...,\mathcal{L}^{m-1}(\mathcal{V}_{1})\},
\end{equation}
where $\mathcal{L}^{i}(\mathcal{V}_{1})=\mathcal{L}(\mathcal{L}^{i-1}(\mathcal{V}_{1}))$, $\mathcal{L}^{0}(\mathcal{V}_{1})=\mathcal{V}_{1}$,
then we have
\begin{equation}
\mathcal{K}_{m}(\mathcal{L}^{T},\mathcal{W}_{1})=span\{\mathcal{W}_{1},\mathcal{L}^{T}(\mathcal{W}_{1}),...,(\mathcal{L}^{T})^{m-1}(\mathcal{W}_{1})\}.
\end{equation}

Algorithm \ref{alg0.1} lists the Lanczos $\mathcal{L}$-Biorthogonalization procedure in tensor form that will be used to produce two series of biorthogonalization tensors.
\begin{algorithm}
\caption{A Lanczos $\mathcal{L}$-biorthogonalization procedure in tensor form.}
\label{alg0.1}
\begin{algorithmic}
\STATE{Initial: Let $\mathcal{V}_{0}=\mathcal{W}_{0}=\mathcal{O}\in\mathbb{R}^{I_{1}\times I_{2}\times...\times I_{N}}$. Select $\mathcal{V}_{1}$ and $\mathcal{W}_{1}$ subject to $\langle\mathcal{W}_{1}, \mathcal{L}(\mathcal{V}_{1}) \rangle=1$. Set $\delta_{1}=\beta_{1}=0$.}
\STATE{Output: biorthogonalization tensor series $\mathcal{V}_{j}$, $\mathcal{W}_{j}$, $j=1,2,...$}
\FOR{$j=1,2,...$}
\STATE{$\alpha_{j}=\langle\mathcal{L}^{2}(\mathcal{V}_{j}), \mathcal{W}_{j} \rangle$}
\STATE{$\overline{\mathcal{V}}_{j+1}=\mathcal{L}(\mathcal{V}_{j})-\alpha_{j}\mathcal{V}_{j}-\beta_{j}\mathcal{V}_{j-1}$}
\STATE{$\overline{\mathcal{W}}_{j+1}=\mathcal{L}^{T}(\mathcal{W}_{j})-\alpha_{j}\mathcal{W}_{j}-\delta_{j}\mathcal{W}_{j-1}$}
\STATE{$\delta_{j+1}=|\langle\overline{\mathcal{W}}_{j+1}, \mathcal{L}(\overline{\mathcal{V}}_{j+1}) \rangle|^{\frac{1}{2}}$}
\STATE{$\beta_{j+1}=\frac{\langle\overline{\mathcal{W}}_{j+1}, \mathcal{L}(\overline{\mathcal{V}}_{j+1}) \rangle}{\delta_{j+1}}$}
\STATE{$\mathcal{V}_{j+1}=\frac{\overline{\mathcal{V}}_{j+1}}{\delta_{j+1}}$}
\STATE{$\mathcal{W}_{j+1}=\frac{\overline{\mathcal{W}}_{j+1}}{\beta_{j+1}}$}
\ENDFOR
\end{algorithmic}
\end{algorithm}

We have the following results for Algorithm \ref{alg0.1}. The proofs of these results are similar to the proof of Proposition 1 in \cite{CJH11} by using the definitions of the inner product \eqref{eq0.24}, \eqref{eq2.5} and linear operator $\mathcal{L}$ in \eqref{loper1} and are omitted.
\begin{proposition}\label{pro0.31}
If Alogrithm \ref{alg0.1} stops at the $m$-th step, then the tensors $\mathcal{V}_{j}$ and $\mathcal{W}_{i} (i,j=1,2,...,m)$  produced Algorithm \ref{alg0.1} are $\mathcal{L}$-biorthogonal, i.e.,
\begin{equation}\label{p1}
\langle\mathcal{W}_{i}, \mathcal{L}(\mathcal{V}_{j})\rangle=\delta_{i,j}, 1\leq i, j\leq m,
\end{equation}
where
\begin{equation}
 \delta_{i,j}=\left\{
 \begin{aligned}
               1, \qquad & & {i=j}, \\
                                0     , \qquad & & {otherwise}. \\
\end{aligned}
\right.
\end{equation}
\end{proposition}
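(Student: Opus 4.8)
The plan is to run an induction on the index pair and show that the scalars $g_{i,j}:=\langle\mathcal{W}_{i},\mathcal{L}(\mathcal{V}_{j})\rangle$ satisfy $g_{i,j}=\delta_{i,j}$. The whole argument rests on two preparatory observations. First, I would clear the denominators in the definitions of $\mathcal{V}_{j+1}$ and $\mathcal{W}_{j+1}$ to rewrite the update lines of Algorithm \ref{alg0.1} as the three-term recurrences $\mathcal{L}(\mathcal{V}_{j})=\delta_{j+1}\mathcal{V}_{j+1}+\alpha_{j}\mathcal{V}_{j}+\beta_{j}\mathcal{V}_{j-1}$ and $\mathcal{L}^{T}(\mathcal{W}_{j})=\beta_{j+1}\mathcal{W}_{j+1}+\alpha_{j}\mathcal{W}_{j}+\delta_{j}\mathcal{W}_{j-1}$. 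Second, I would record the adjoint identity $\langle\mathcal{L}(\mathcal{X}),\mathcal{Y}\rangle=\langle\mathcal{X},\mathcal{L}^{T}(\mathcal{Y})\rangle$, which follows from \eqref{eq2.1} applied mode by mode, so that a factor of $\mathcal{L}$ can be moved freely between the $\mathcal{W}$-slot and the $\mathcal{V}$-slot of $g_{i,j}$.

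The base case is immediate: the initialization forces $g_{1,1}=\langle\mathcal{W}_{1},\mathcal{L}(\mathcal{V}_{1})\rangle=1$, and $g_{i,0}=g_{0,j}=0$ since $\mathcal{V}_{0}=\mathcal{W}_{0}=\mathcal{O}$. For the diagonal at a general step I would use the two scalings together: because $\beta_{j+1}\delta_{j+1}=\langle\overline{\mathcal{W}}_{j+1},\mathcal{L}(\overline{\mathcal{V}}_{j+1})\rangle$ by construction, dividing this factor out of $g_{j+1,j+1}=\frac{1}{\beta_{j+1}\delta_{j+1}}\langle\overline{\mathcal{W}}_{j+1},\mathcal{L}(\overline{\mathcal{V}}_{j+1})\rangle$ gives $g_{j+1,j+1}=1$ directly, independently of the induction. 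The heart of the matter is the off-diagonal vanishing, where I would assume as induction hypothesis that $g_{i,l}=\delta_{i,l}$ whenever $\max(i,l)\le j$ and prove the same for $\max(i,l)=j+1$.

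To handle the ``row'' $g_{j+1,l}$ with $l\le j$ I would expand $\mathcal{W}_{j+1}$ by its recurrence, move the operator across by the adjoint identity, and then expand $\mathcal{L}^{2}(\mathcal{V}_{l})$ by the $\mathcal{V}$-recurrence, reducing everything to lower-index $g$'s to which the hypothesis applies. Two cases need care. When $l=j$ the surviving term is $\langle\mathcal{W}_{j},\mathcal{L}^{2}(\mathcal{V}_{j})\rangle$, which equals $\alpha_{j}$ by the very definition of $\alpha_{j}$ in Algorithm \ref{alg0.1}, and it cancels exactly against the $-\alpha_{j}g_{j,j}$ term; when $l=j-1$ the leftover $\delta_{j}$ arising from $\langle\mathcal{W}_{j},\mathcal{L}^{2}(\mathcal{V}_{j-1})\rangle$ cancels against $-\delta_{j}g_{j-1,j-1}$. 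The ``column'' $g_{i,j+1}$ with $i\le j$ is treated symmetrically, expanding $\mathcal{V}_{j+1}$ by its recurrence and $\mathcal{L}^{T}(\mathcal{W}_{i})$ by the $\mathcal{W}$-recurrence; the analogous special cases $i=j$ and $i=j-1$ cancel, the latter using that the same off-diagonal coefficient $\beta_{j}$ appears in both recurrences.

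The main obstacle I anticipate is organizational rather than conceptual: one must sequence the induction so that the newly established row values (in particular $g_{j+1,j}=0$) are already available when they are needed inside the column computation, and one must keep the two distinct scalings $\delta$ and $\beta$ straight so that the $l=j-1$ and $i=j-1$ cancellations come out cleanly. Throughout I would tacitly use that the algorithm has not broken down before step $m$, i.e.\ that every $\langle\overline{\mathcal{W}}_{j+1},\mathcal{L}(\overline{\mathcal{V}}_{j+1})\rangle$ is nonzero, so that $\delta_{j+1}$ and $\beta_{j+1}$ are nonzero and the divisions producing $g_{j+1,l}=0$ and $g_{i,j+1}=0$ are legitimate. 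No genuinely hard estimate is involved; the content is the bookkeeping that makes the three-term recurrences interlock through the adjoint relation and the definition of $\alpha_{j}$.
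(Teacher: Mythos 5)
Your induction is correct and is exactly the argument the paper has in mind: the paper omits the proof, stating only that it is ``similar to the proof of Proposition 1 in \cite{CJH11}'', and that reference proves $\mathcal{L}$-biorthogonality by the same three-term-recurrence induction you describe (diagonal entries equal to $1$ by the $\delta_{j+1}\beta_{j+1}$ normalization, off-diagonal entries vanishing via the adjoint identity and the definition of $\alpha_{j}$). Your bookkeeping of the $l=j$, $l=j-1$ cancellations and of the sequencing needed so that $g_{j+1,j}=0$ is available for the column step is accurate, so nothing is missing.
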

\begin{proposition}\label{pro0.32}
Suppose that $\widetilde{\mathcal{V}}_{m}$ is the ($N+1$)-order tensor with columns $\mathcal{V}_{1}, \mathcal{V}_{2},..., \mathcal{V}_{m}$, and
$\widetilde{\mathcal{W}}_{m}$ is the ($N+1$)-order tensor with the columns $\mathcal{W}_{1}, \mathcal{W}_{2},..., \mathcal{W}_{m}$,
$\widetilde{\mathcal{H}}_{m}$ and $\widetilde{\mathcal{G}}_{m}$ are the ($N+1$)-order tensors with the columns $\mathcal{H}_{j}:=\mathcal{L}(\mathcal{V}_{j})$ and $\mathcal{G}_{j}:=\mathcal{L}^{T}(\mathcal{W}_{j})$ $(j=1,2,...,m)$, respectively.
Then we have
\begin{equation}\label{eq0.381}
\widetilde{\mathcal{H}}_{m}=\widetilde{\mathcal{V}}_{m+1}\times_{(N+1)}\underline{\textbf{T}_{m}^{T}}
\end{equation}
and
\begin{equation}\label{eq0.3101}
\widetilde{\mathcal{G}}_{m}=\widetilde{\mathcal{W}}_{m+1}\times_{(N+1)}\underline{\textbf{T}_{m}},
\end{equation}
where
\begin{equation}
\underline{\textbf{T}_{m}}=
\begin{pmatrix}\textbf{T}_{m}\\
\delta_{m+1}\textbf{e}_{m}^{T}\\
\end{pmatrix}
\end{equation}
with
\begin{equation}
\textbf{T}_{m}=
\begin{pmatrix}
\alpha_{1}& \beta_{2} &  &  &\\
 \delta_{2}& \alpha_{2} &\beta_{3} & & \\
  & \ddots & \ddots & \ddots & \\
  &  &  \delta_{m-1}& \alpha_{m-1} & \beta_{m} \\
  &  &  & \delta_{m} & \alpha_{m}\\
\end{pmatrix}
\end{equation}
being a triangular matrix with its elements generated by Algorithm \ref{alg0.1}.
Moreover, it holds that
\begin{equation}\label{eq0.312}
\widetilde{\mathcal{W}}_{m}\boxtimes^{(N+1)}\widetilde{\mathcal{H}}_{m}=\textbf{E}_{m}
\end{equation}
and
\begin{equation}\label{eq0.313}
\widetilde{\mathcal{W}}_{m}\boxtimes^{(N+1)}\mathcal{L}(\widetilde{\mathcal{H}}_{m})=\textbf{T}_{m},
\end{equation}
where $\textbf{E}_{m}$ denotes the identity matrix with $m$ order.
\end{proposition}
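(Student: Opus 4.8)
The plan is to read all four identities off the three-term recurrences generated by Algorithm~\ref{alg0.1}, invoking the $\mathcal{L}$-biorthogonality of Proposition~\ref{pro0.31} for the two $\boxtimes^{(N+1)}$ relations. First I would put the update steps of Algorithm~\ref{alg0.1} into solved form: substituting $\mathcal{V}_{j+1}=\overline{\mathcal{V}}_{j+1}/\delta_{j+1}$ into the definition of $\overline{\mathcal{V}}_{j+1}$ gives
\begin{equation}\label{recV}
\mathcal{H}_{j}=\mathcal{L}(\mathcal{V}_{j})=\beta_{j}\mathcal{V}_{j-1}+\alpha_{j}\mathcal{V}_{j}+\delta_{j+1}\mathcal{V}_{j+1},
\end{equation}
and the same manipulation on the $\mathcal{W}$-update yields $\mathcal{G}_{j}=\mathcal{L}^{T}(\mathcal{W}_{j})=\delta_{j}\mathcal{W}_{j-1}+\alpha_{j}\mathcal{W}_{j}+\beta_{j+1}\mathcal{W}_{j+1}$. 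Since the $(N+1)$-mode product $\widetilde{\mathcal{V}}_{m+1}\times_{(N+1)}\underline{\textbf{T}_{m}^{T}}$ acts only on the column index, its $j$-th column is exactly $\sum_{k}(\underline{\textbf{T}_{m}})_{k,j}\,\mathcal{V}_{k}$, and the nonzero entries of the $j$-th column of $\underline{\textbf{T}_{m}}$ are precisely $\beta_{j},\alpha_{j},\delta_{j+1}$. Collecting \eqref{recV} over $j=1,\dots,m$ therefore reproduces \eqref{eq0.381} columnwise, and an entirely analogous calculation with the $\mathcal{W}$-recurrence gives the companion relation \eqref{eq0.3101}; this first step is pure bookkeeping.

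For \eqref{eq0.312} I would evaluate the $\boxtimes^{(N+1)}$-product entrywise. By the defining relation of $\boxtimes^{(N+1)}$ together with \eqref{eq0.24}, the $(i,j)$ entry is $\langle\mathcal{W}_{i},\mathcal{H}_{j}\rangle=\langle\mathcal{W}_{i},\mathcal{L}(\mathcal{V}_{j})\rangle$, which by the $\mathcal{L}$-biorthogonality \eqref{p1} equals $\delta_{i,j}$; hence $\widetilde{\mathcal{W}}_{m}\boxtimes^{(N+1)}\widetilde{\mathcal{H}}_{m}=\textbf{E}_{m}$.

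For \eqref{eq0.313}, where $\mathcal{L}(\widetilde{\mathcal{H}}_{m})$ means $\mathcal{L}$ applied to each $N$-order column, I would first apply $\mathcal{L}$ to \eqref{recV} to obtain $\mathcal{L}(\mathcal{H}_{j})=\mathcal{L}^{2}(\mathcal{V}_{j})=\beta_{j}\mathcal{H}_{j-1}+\alpha_{j}\mathcal{H}_{j}+\delta_{j+1}\mathcal{H}_{j+1}$. The $(i,j)$ entry of the product is then $\langle\mathcal{W}_{i},\mathcal{L}^{2}(\mathcal{V}_{j})\rangle=\beta_{j}\langle\mathcal{W}_{i},\mathcal{H}_{j-1}\rangle+\alpha_{j}\langle\mathcal{W}_{i},\mathcal{H}_{j}\rangle+\delta_{j+1}\langle\mathcal{W}_{i},\mathcal{H}_{j+1}\rangle$, and \eqref{p1} collapses this to $\beta_{j}\delta_{i,j-1}+\alpha_{j}\delta_{i,j}+\delta_{j+1}\delta_{i,j+1}$, which is exactly the $(i,j)$ entry of the tridiagonal matrix $\textbf{T}_{m}$. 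Equivalently, one may note $\mathcal{L}(\widetilde{\mathcal{H}}_{m})=\widetilde{\mathcal{H}}_{m+1}\times_{(N+1)}\underline{\textbf{T}_{m}^{T}}$ and pull the matrix outside the $\boxtimes^{(N+1)}$-product using Lemma~\ref{lem2.3} (applied columnwise) or the computation behind Lemma~\ref{lem2.4}, reducing the product to $(\widetilde{\mathcal{W}}_{m}\boxtimes^{(N+1)}\widetilde{\mathcal{H}}_{m+1})\underline{\textbf{T}_{m}}=(\textbf{E}_{m},\textbf{0})\underline{\textbf{T}_{m}}=\textbf{T}_{m}$, the trailing zero column annihilating the appended row $\delta_{m+1}\textbf{e}_{m}^{T}$.

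The only genuinely delicate point is the bookkeeping at the boundary index $m+1$: in $\mathcal{L}^{2}(\mathcal{V}_{m})$ the term $\delta_{m+1}\mathcal{H}_{m+1}=\delta_{m+1}\mathcal{L}(\mathcal{V}_{m+1})$ appears, so I must invoke $\langle\mathcal{W}_{i},\mathcal{L}(\mathcal{V}_{m+1})\rangle=0$ for $i\le m$---the biorthogonality extended one step, which is exactly how $\mathcal{V}_{m+1}$ is constructed in Algorithm~\ref{alg0.1}---and keep careful track of which matrix carries the extra row $\delta_{m+1}\textbf{e}_{m}^{T}$, so that the trailing row/column is correctly discarded when passing from $\underline{\textbf{T}_{m}}$ to $\textbf{T}_{m}$. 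Everything else is a routine transcription of the classical nonsymmetric Lanczos relations into the $\times_{(N+1)}$ and $\boxtimes^{(N+1)}$ calculus.
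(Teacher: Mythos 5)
Your proof is correct, and it follows exactly the route the paper intends: the paper omits the proof of this proposition, stating only that it parallels Proposition 1 of \cite{CJH11} using the recurrences of Algorithm \ref{alg0.1}, the definition of $\boxtimes^{(N+1)}$, and the $\mathcal{L}$-biorthogonality of Proposition \ref{pro0.31} --- which is precisely the solved-form three-term recurrence, columnwise bookkeeping, and entrywise inner-product argument you give. Your explicit handling of the boundary term $\delta_{m+1}\mathcal{L}(\mathcal{V}_{m+1})$ and of the trailing row of $\underline{\textbf{T}_{m}}$ supplies detail the paper leaves implicit, and is accurate.
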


We remark that \eqref{eq0.381} and \eqref{eq0.3101} can be represented as
\begin{equation}\label{eq0.38}
\widetilde{\mathcal{H}}_{m}=\widetilde{\mathcal{V}}_{m}\times_{(N+1)}T_{m}^{T}+\delta_{m+1}\mathcal{Z}_{1}\times_{(N+1)}\textbf{K}_{m}
\end{equation}
and
\begin{equation}\label{eq0.310}
\widetilde{\mathcal{G}}_{m}=\widetilde{\mathcal{W}}_{m}\times_{(N+1)}T_{m}+\beta_{m+1}\mathcal{Z}_{2}\times_{(N+1)}\textbf{K}_{m},
\end{equation}
where $\mathcal{Z}_{1}$ is an ($N+1$)-order tensor with $m$ column tensors $\mathcal{O},...,\mathcal{O},\mathcal{V}_{m+1}$, and $\mathcal{Z}_{2}$ is an ($N+1$)-order tensor with $m$ column tensors $\mathcal{O},...,\mathcal{O},\mathcal{W}_{m+1}$, and $\textbf{K}_{m}$ is an $m\times m$ matrix of the form $\textbf{K}_{m}=[0,...,0,\textbf{e}_{m}]$ with $\textbf{e}_{m}$ being the $m$-th column of $\textbf{E}_{m}$.

With the results above we can present the Lanczos $\mathcal{L}$-Biorthogonalization algorithm in tensor form for solving \eqref{eq1.1}.
For any initial tensor $\mathcal{X}_{0}\in\mathbb{R}^{I_{1}\times I_{2}\times...\times I_{N}}$, let $\mathcal{R}_{0}=\mathcal{D}-\mathcal{L}(\mathcal{X}_{0})$ denote its residual. Let $\mathcal{V}_{1}=\mathcal{R}_{0}/\|\mathcal{R}_{0}\|$ and
\begin{equation}
\mathcal{X}_{m}\in\mathcal{X}_{0}+\mathcal{K}_{m}(\mathcal{L},\mathcal{V}_{1}),
\end{equation}
then
\begin{equation}\label{eq0.316}
\mathcal{R}_{m}=(\mathcal{D}-\mathcal{L}(\mathcal{X}_{m}))\perp\mathcal{L}^{T}(\mathcal{K}_{m}(\mathcal{L}^{T},\mathcal{W}_{1})).
\end{equation}
It is easy to verify that a series of tensors $\{\mathcal{V}_{1},\mathcal{V}_{2},...,\mathcal{V}_{m}\}$ produced via Algorithm \ref{alg0.1} form a basis of $\mathcal{K}_{m}(\mathcal{L},\mathcal{V}_{1})$. Thus we have
\begin{equation}\label{eq0.317}
\mathcal{X}_{m}=\mathcal{X}_{0}+\widetilde{\mathcal{V}}_{m}\overline{\times}_{(N+1)}\textbf{y}_{m},
\end{equation}
where $\textbf{y}_{m}\in\mathbb{R}^{m}$. 
By Eq \eqref{eq0.316} and \eqref{eq0.317}, we have
\begin{equation}
\langle \mathcal{D}-\mathcal{L}(\mathcal{X}_{0}+	\widetilde{\mathcal{V}}_{m}\overline{\times}_{(N+1)}\textbf{y}_{m}), \mathcal{L}^{T}(\widetilde{\mathcal{W}}_{m})\rangle=0.
\end{equation}
A further computation results in
\begin{equation}
\langle \mathcal{L}(\widetilde{\mathcal{H}}_{m})\overline{\times}_{(N+1)}\textbf{y}_{m}, \widetilde{\mathcal{W}}_{m}\rangle
=\langle \mathcal{L}(\mathcal{R}_{0}), \widetilde{\mathcal{W}}_{m}\rangle.
\end{equation}
Through a simple inner product operation and according to Eq.\eqref{eq2.6} and Lemma \ref{lem2.3} we have
\begin{equation}\label{eq0.318}
(\widetilde{\mathcal{W}}_{m}\boxtimes^{(N+1)}\mathcal{L}(\widetilde{\mathcal{H}}_{m}))\textbf{y}_{m}
=\widetilde{\mathcal{W}}_{m}\boxtimes^{(N+1)}\mathcal{L}(\mathcal{R}_{0}).
\end{equation}
Submitting Eq.\eqref{eq0.313} into  Eq.\eqref{eq0.318} results in the tridiagonal system on $y_{m}$:
\begin{equation}\label{eq0.319}
\textbf{T}_{m}\textbf{y}_{m}=\|\mathcal{R}_{0}\|\textbf{e}_{1}.
\end{equation}
Once we compute $y_{m}$ by \eqref{eq0.319}, we get the solution $\mathcal{X}_{m}$ of \eqref{eq1.1} by \eqref{eq0.317}. We summarize this method in Algorithm \ref{alg3.2}, which is called Tensor Lanczos $\mathcal{L}$-Biorthogonalization algorithm (TLB).
\begin{algorithm}
\caption{TLB: A tensor Lanczos $\mathcal{L}$-biorthogonalization Algorithm for solving \eqref{eq1.1}}
\label{alg3.2}
\begin{algorithmic}
\STATE{Choose an initial tensor $\mathcal{X}_{0}$ and compute $\mathcal{R}_{0}=\mathcal{D}-\mathcal{L}(\mathcal{X}_{0})$}.
\STATE{Set $\mathcal{V}_{1}=\frac{\mathcal{R}_{0}}{\|\mathcal{R}_{0}\|}$, choose a tensor $\mathcal{W}_{1}$ such that $\langle\mathcal{L}(\mathcal{V}_{1}), \mathcal{W}_{1}\rangle=1$.}
\FOR{$m=1,2,...$ until convergence}
\STATE{Compute Lanczos $\mathcal{L}$-Biorthogonalization tensors $\mathcal{V}_{1},...,\mathcal{V}_{m}$, $\mathcal{W}_{1},...,\mathcal{W}_{m}$ and $\textbf{T}_{m}$ by Algorithm \ref{alg0.1}.}
\STATE{Compute $\textbf{y}_{m}$ by \eqref{eq0.319}.}
\ENDFOR
\STATE{Compute the solution $\mathcal{X}_{m}$ of \eqref{eq1.1} by \eqref{eq0.317}.}
\end{algorithmic}
\end{algorithm}

We remark Algorithm \ref{alg3.2} have to compute the inverse of $\textbf{T}_{m}$. When $\textbf{T}_{m}$ is of large size, it needs much computation. We present two improved algorithms for Algorithm \ref{alg3.2} in the next section.

\section{The TBiCOR and TCORS Algorithms}\label{sec4}

\subsection{The TBiCOR Algorithm}\label{sec4.1}
In this subsection, we develop an improved algorithm by introducing the $LU$ decomposition to $\textbf{T}_{m}$ in Algorithm \ref{alg3.2}.

Let the $LU$ decomposition of $\textbf{T}_{m}$ be
\begin{equation}\label{eq0.40}
\textbf{T}_{m}=\textbf{L}_{m}\textbf{U}_{m},
\end{equation}
then, according to Lemma \ref{lem2.1}, substituting \eqref{eq0.319} and \eqref{eq0.40} into \eqref{eq0.317} results in
\begin{align}\label{eq0.401}
\mathcal{X}_{m}&=\mathcal{X}_{0}+\widetilde{\mathcal{V}}_{m}\overline{\times}_{(N+1)}\textbf{y}_{m}\nonumber\\
&=\mathcal{X}_{0}+\widetilde{\mathcal{V}}_{m}\overline{\times}_{(N+1)}(\textbf{U}_{m}^{-1}\textbf{L}_{m}^{-1}(\|\mathcal{R}_{0}\|\textbf{e}_{1}))\nonumber\\
&=\mathcal{X}_{0}+\widetilde{\mathcal{P}}_{m}\overline{\times}_{(N+1)}\textbf{z}_{m},
\end{align}
where $\textbf{z}_{m}=\textbf{L}_{m}^{-1}(\|\mathcal{R}_{0}\|\textbf{e}_{1})$ and $\widetilde{\mathcal{P}}_{m}=\widetilde{\mathcal{V}}_{m}\times_{(N+1)}(\textbf{U}_{m}^{-1})^{T}$.

We consider the solution of the system $\mathcal{L}^{T}(\mathcal{X}^{*})=\mathcal{D}^{*}$. The dual approximation $\mathcal{X}_{m}^{*}$ is the  subspace $\mathcal{X}_{0}^{*}+\mathcal{K}_{m}(\mathcal{L}^{T},\mathcal{W}_{1})$ that satisfies
\begin{align*}
(\mathcal{D}^{*}-\mathcal{L}^{T}(\mathcal{X}_{m}^{*}))\perp\mathcal{L}(\mathcal{K}_{m}(\mathcal{L},\mathcal{V}_{1})).
\end{align*}
Set $\mathcal{R}_{0}^{*}=\mathcal{D}^{*}-\mathcal{L}^{T}(\mathcal{X}_{0}^{*})$ and $\mathcal{W}_{1}=\mathcal{R}_{0}^{*}/\|\mathcal{R}_{0}^{*}\|$. If we choose $\mathcal{V}_{1}$ such that $\langle\mathcal{V}_{1}, \mathcal{L}(\mathcal{W}_{1})\rangle=1$, then similar to \eqref{eq0.317}-\eqref{eq0.319}, the solution of the dual system $\mathcal{L}^{T}(\mathcal{X}^{*})=\mathcal{D}^{*}$ can be represented as
\begin{equation}\label{eq0.41}
\mathcal{X}_{m}^{*}=\mathcal{X}_{0}^{*}+\widetilde{\mathcal{W}}_{m}\overline{\times}_{(N+1)}\textbf{y}_{m}^{*},
\end{equation}
where $\textbf{y}_{m}^{*}$ is derived from
\begin{equation}\label{eq0.42}
\textbf{T}_{m}^{T}\textbf{y}_{m}^{*}=\|\mathcal{R}_{0}^{*}\|\textbf{e}_{1}.
\end{equation}
Similar to \eqref{eq0.401}, according to Lemma \ref{lem2.1}, \eqref{eq0.41} together with \eqref{eq0.40} and \eqref{eq0.42} results in
\begin{align*}
\mathcal{X}_{m}^{*}&=\mathcal{X}_{0}^{*}+\widetilde{\mathcal{P}}_{m}^{*}\overline{\times}_{(N+1)}\textbf{z}_{m}^{*},
\end{align*}
where $\widetilde{\mathcal{P}}_{m}^{*}=\widetilde{\mathcal{W}}_{m}\times_{(N+1)}\textbf{L}_{m}^{-1}$, and $\textbf{z}_{m}^{*}=(\textbf{U}_{m}^{T})^{-1}(\|\mathcal{R}_{0}^{*}\|\textbf{e}_{1})$.

\begin{proposition}\label{pro4.1}
Let $\mathcal{R}_{i}=\mathcal{D}-\mathcal{L}(\mathcal{X}_{i})$ and $\mathcal{R}_{i}^{*}=\mathcal{D}^{*}-\mathcal{L}^{T}(\mathcal{X}_{i}^{*})$ are the $i$-th residual tensor and the $i$-th dual residual tensor, respectively, then it holds that
\begin{equation}\label{eq3.20}
\langle \mathcal{L}(\mathcal{R}_{i}), \mathcal{R}_{j}^{*}\rangle=0, (0\leq i\neq j\leq k).
\end{equation}
\end{proposition}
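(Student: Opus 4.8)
The plan is to exploit the two Galerkin (orthogonality) conditions already recorded for the primal and dual iterates, namely $\mathcal{R}_m\perp\mathcal{L}^{T}(\mathcal{K}_{m}(\mathcal{L}^{T},\mathcal{W}_{1}))$ from \eqref{eq0.316} and the dual analogue $\mathcal{R}_m^{*}\perp\mathcal{L}(\mathcal{K}_{m}(\mathcal{L},\mathcal{V}_{1}))$, together with the adjoint identity $\langle\mathcal{L}(\mathcal{X}),\mathcal{Y}\rangle=\langle\mathcal{X},\mathcal{L}^{T}(\mathcal{Y})\rangle$. Since $i\neq j$, I would split into the two cases $i<j$ and $i>j$, using the dual condition in the first and the primal condition in the second.

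First I would record the elementary but crucial fact that each residual lives in a Krylov subspace one dimension larger than its iterate. Because $\mathcal{X}_{i}=\mathcal{X}_{0}+\widetilde{\mathcal{V}}_{i}\overline{\times}_{(N+1)}\textbf{y}_{i}$ with the columns of $\widetilde{\mathcal{V}}_{i}$ spanning $\mathcal{K}_{i}(\mathcal{L},\mathcal{V}_{1})$, we have $\mathcal{X}_{i}-\mathcal{X}_{0}\in\mathcal{K}_{i}(\mathcal{L},\mathcal{V}_{1})$, whence
\begin{equation*}
\mathcal{R}_{i}=\mathcal{R}_{0}-\mathcal{L}(\mathcal{X}_{i}-\mathcal{X}_{0})\in\mathcal{K}_{i+1}(\mathcal{L},\mathcal{V}_{1}),
\end{equation*}
using $\mathcal{R}_{0}=\|\mathcal{R}_{0}\|\mathcal{V}_{1}$ and $\mathcal{L}(\mathcal{K}_{i}(\mathcal{L},\mathcal{V}_{1}))\subseteq\mathcal{K}_{i+1}(\mathcal{L},\mathcal{V}_{1})$. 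The identical argument applied to the dual iterates, with $\mathcal{W}_{1}=\mathcal{R}_{0}^{*}/\|\mathcal{R}_{0}^{*}\|$, gives $\mathcal{R}_{j}^{*}\in\mathcal{K}_{j+1}(\mathcal{L}^{T},\mathcal{W}_{1})$.

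With these inclusions the two cases are immediate. If $i<j$, then $i+1\leq j$, so $\mathcal{R}_{i}\in\mathcal{K}_{i+1}(\mathcal{L},\mathcal{V}_{1})\subseteq\mathcal{K}_{j}(\mathcal{L},\mathcal{V}_{1})$ and hence $\mathcal{L}(\mathcal{R}_{i})\in\mathcal{L}(\mathcal{K}_{j}(\mathcal{L},\mathcal{V}_{1}))$; the dual Galerkin condition $\mathcal{R}_{j}^{*}\perp\mathcal{L}(\mathcal{K}_{j}(\mathcal{L},\mathcal{V}_{1}))$ then forces $\langle\mathcal{L}(\mathcal{R}_{i}),\mathcal{R}_{j}^{*}\rangle=0$. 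If instead $i>j$, I would first move $\mathcal{L}$ across the inner product, $\langle\mathcal{L}(\mathcal{R}_{i}),\mathcal{R}_{j}^{*}\rangle=\langle\mathcal{R}_{i},\mathcal{L}^{T}(\mathcal{R}_{j}^{*})\rangle$; since $j+1\leq i$ gives $\mathcal{R}_{j}^{*}\in\mathcal{K}_{j+1}(\mathcal{L}^{T},\mathcal{W}_{1})\subseteq\mathcal{K}_{i}(\mathcal{L}^{T},\mathcal{W}_{1})$, we obtain $\mathcal{L}^{T}(\mathcal{R}_{j}^{*})\in\mathcal{L}^{T}(\mathcal{K}_{i}(\mathcal{L}^{T},\mathcal{W}_{1}))$, and the primal condition $\mathcal{R}_{i}\perp\mathcal{L}^{T}(\mathcal{K}_{i}(\mathcal{L}^{T},\mathcal{W}_{1}))$ from \eqref{eq0.316} finishes the argument.

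I expect the proof to be routine once the Krylov-membership step is in place; the only points needing care are the bookkeeping of the index shift (the residual of step $i$ sits in $\mathcal{K}_{i+1}$, not $\mathcal{K}_{i}$, which is precisely why the strict inequality $i\neq j$ suffices) and correctly pairing each case with the appropriate orthogonality relation through the adjoint identity. There is no genuine analytic obstacle here; the main risk is an off-by-one error in the subspace indices or invoking the primal condition where the dual is required.
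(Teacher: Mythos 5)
Your proof is correct, but it follows a genuinely different route from the paper's. The paper argues by explicit computation: substituting \eqref{eq0.317} and \eqref{eq0.38} into $\mathcal{R}_{i}=\mathcal{D}-\mathcal{L}(\mathcal{X}_{i})$ and using the tridiagonal system \eqref{eq0.319}, it telescopes the expression down to the collinearity relations $\mathcal{R}_{i}=-\delta_{i+1}(\textbf{e}_{i}^{T}\textbf{y}_{i})\mathcal{V}_{i+1}$ and $\mathcal{R}_{j}^{*}=-\beta_{j+1}(\textbf{e}_{j}^{T}\textbf{y}_{j}^{*})\mathcal{W}_{j+1}$, after which \eqref{eq3.20} is an immediate consequence of the $\mathcal{L}$-biorthogonality \eqref{p1} of Proposition \ref{pro0.31}. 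You instead work abstractly from the two Petrov--Galerkin conditions and the nesting of the Krylov subspaces, splitting on $i<j$ versus $i>j$ and moving $\mathcal{L}$ across the inner product via the adjoint identity in the second case; your index bookkeeping ($\mathcal{R}_{i}\in\mathcal{K}_{i+1}$, so strict inequality of indices suffices) is right, and the cases $i=0$ or $j=0$ cause no trouble since you only ever invoke the Galerkin condition at the larger index. The trade-off is that the paper's computation yields the stronger and reusable fact that the $i$-th residual is a scalar multiple of the $(i+1)$-st Lanczos tensor (which underlies the breakdown-free behaviour of the recurrences), whereas your argument is shorter, avoids the $\boxtimes^{(N+1)}$ and $\overline{\times}_{(N+1)}$ manipulations entirely, and would transfer verbatim to any pair of Petrov--Galerkin processes built on nested biorthogonal Krylov subspaces.
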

\begin{proof} According to Lemma \ref{lem2.1}, \eqref{eq0.38}, \eqref{eq0.310}, \eqref{eq0.317} and \eqref{eq0.41}, we have
\begin{equation}\label{eq4.2}
\begin{aligned}
\mathcal{R}_{i}&=\mathcal{D}-\mathcal{L}(\mathcal{X}_{0}+\widetilde{\mathcal{V}}_{i}\overline{\times}_{(N+1)}\textbf{y}_{i})\\
&=\mathcal{R}_{0}-\mathcal{L}(\widetilde{\mathcal{V}}_{i})\overline{\times}_{(N+1)}\textbf{y}_{i}\\
&=\mathcal{R}_{0}-\widetilde{\mathcal{V}}_{i}\times_{(N+1)}\textbf{T}_{i}^{T}\overline{\times}_{(N+1)}\textbf{y}_{i}-\delta_{i+1}\mathcal{Z}_{1}\times_{(N+1)}\textbf{K}_{i}\overline{\times}_{(N+1)}\textbf{y}_{i}\\
&=\mathcal{R}_{0}-\widetilde{\mathcal{V}}_{i}\overline{\times}_{(N+1)}(\textbf{T}_{i}\textbf{y}_{i})-\delta_{i+1}\mathcal{Z}_{1}\overline{\times}_{(N+1)}(\textbf{K}_{i}^{T}\textbf{y}_{i})\\
&=-\delta_{i+1}\textbf{e}_{i}^{T}\textbf{y}_{i}\mathcal{V}_{i+1}.
\end{aligned}
\end{equation}
Similarly, we can prove that
\begin{equation}\label{eq4.3}
\mathcal{R}_{j}^{*}=-\beta_{j+1}\textbf{e}_{j}^{T}\textbf{y}_{j}^{*}\mathcal{W}_{j+1}.
\end{equation}
\eqref{eq4.2} and \eqref{eq4.3} together with Proposition \ref{pro0.31} result in \eqref{eq3.20}.
\end{proof}
\begin{proposition}\label{pro4.2}
Let $\mathcal{P}_{i}$ and $\mathcal{P}_{i}^{*} (i=1,...,k)$ are the $i$-th column tensor of $\widetilde{\mathcal{P}}_{k}$ and $\widetilde{\mathcal{P}}_{k}^{*}$, respectively. It holds that
\begin{equation}\label{eq0.499}
\langle \mathcal{L}^{2}(\mathcal{P}_{i}), \mathcal{P}_{j}^{*}\rangle=0 (i,j=1,...,k,i\neq j).
\end{equation}
\end{proposition}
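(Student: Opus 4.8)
The plan is to bundle all $k^{2}$ scalar relations in \eqref{eq0.499} into a single matrix identity. Specifically, I will show that the $k\times k$ matrix $\widetilde{\mathcal{P}}_k^{*}\boxtimes^{(N+1)}\mathcal{L}^{2}(\widetilde{\mathcal{P}}_k)$ is exactly the identity $\textbf{E}_k$, and then read \eqref{eq0.499} off its off-diagonal entries. The reduction to entries is immediate from the definition of $\boxtimes^{(N+1)}$ together with \eqref{eq0.24} and \eqref{eq2.5}: the $(i,j)$ entry of $\widetilde{\mathcal{P}}_k^{*}\boxtimes^{(N+1)}\mathcal{L}^{2}(\widetilde{\mathcal{P}}_k)$ is $\langle\mathcal{P}_i^{*},\mathcal{L}^{2}(\mathcal{P}_j)\rangle=\langle\mathcal{L}^{2}(\mathcal{P}_j),\mathcal{P}_i^{*}\rangle$, so once the matrix is shown to equal $\textbf{E}_k$ its off-diagonal entries give precisely \eqref{eq0.499} (after the harmless relabeling $i\leftrightarrow j$).

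First I would rewrite $\mathcal{L}^{2}(\widetilde{\mathcal{P}}_k)$ in factored form. Since $\widetilde{\mathcal{P}}_k=\widetilde{\mathcal{V}}_k\times_{(N+1)}(\textbf{U}_k^{-1})^{T}$ and $\mathcal{L}$ is linear and acts only on the modes $1,\dots,N$, it commutes with the mode-$(N+1)$ multiplication, so $\mathcal{L}^{2}(\widetilde{\mathcal{P}}_k)=\mathcal{L}^{2}(\widetilde{\mathcal{V}}_k)\times_{(N+1)}(\textbf{U}_k^{-1})^{T}$. Because the columns of $\widetilde{\mathcal{H}}_k$ are $\mathcal{L}(\mathcal{V}_j)$, the columns of $\mathcal{L}(\widetilde{\mathcal{H}}_k)$ are $\mathcal{L}^{2}(\mathcal{V}_j)$, whence $\mathcal{L}^{2}(\widetilde{\mathcal{V}}_k)=\mathcal{L}(\widetilde{\mathcal{H}}_k)$ and therefore $\mathcal{L}^{2}(\widetilde{\mathcal{P}}_k)=\mathcal{L}(\widetilde{\mathcal{H}}_k)\times_{(N+1)}(\textbf{U}_k^{-1})^{T}$. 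Recalling also $\widetilde{\mathcal{P}}_k^{*}=\widetilde{\mathcal{W}}_k\times_{(N+1)}\textbf{L}_k^{-1}$, both factors are now expressed as the basis tensors $\widetilde{\mathcal{W}}_k$ and $\mathcal{L}(\widetilde{\mathcal{H}}_k)$ multiplied in mode $N+1$ by a matrix.

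Next I would apply the $(N+1)$-order version of Lemma \ref{lem2.4} to strip off those mode-$(N+1)$ matrices:
\begin{equation*}
\widetilde{\mathcal{P}}_k^{*}\boxtimes^{(N+1)}\mathcal{L}^{2}(\widetilde{\mathcal{P}}_k)
=\textbf{L}_k^{-1}\bigl(\widetilde{\mathcal{W}}_k\boxtimes^{(N+1)}\mathcal{L}(\widetilde{\mathcal{H}}_k)\bigr)\textbf{U}_k^{-1}.
\end{equation*}
By \eqref{eq0.313} the inner factor is $\textbf{T}_k$, and the $LU$ factorization \eqref{eq0.40} gives $\textbf{T}_k=\textbf{L}_k\textbf{U}_k$, so the right-hand side collapses to $\textbf{L}_k^{-1}\textbf{L}_k\textbf{U}_k\textbf{U}_k^{-1}=\textbf{E}_k$, which is the desired matrix identity.

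The computation is short, so the real work is making the two bookkeeping steps airtight. The main obstacle is verifying the $(N+1)$-order form of Lemma \ref{lem2.4} with the correct placement of transposes: with $\textbf{A}^{T}=\textbf{L}_k^{-1}$ on the left factor and $\textbf{B}^{T}=(\textbf{U}_k^{-1})^{T}$ (so $\textbf{B}=\textbf{U}_k^{-1}$) on the right factor, the lemma must deliver $\textbf{L}_k^{-1}(\cdots)\textbf{U}_k^{-1}$ rather than a transposed arrangement; an index slip here would break the cancellation. A secondary point to check carefully is that $\mathcal{L}$ genuinely commutes with $\times_{(N+1)}$, i.e. that applying $\mathcal{L}$ columnwise coincides with first forming the mode-$(N+1)$ combinations and then applying $\mathcal{L}$; this is exactly the linearity of $\mathcal{L}$ over the columns, and it is what legitimizes $\mathcal{L}^{2}(\widetilde{\mathcal{P}}_k)=\mathcal{L}(\widetilde{\mathcal{H}}_k)\times_{(N+1)}(\textbf{U}_k^{-1})^{T}$.
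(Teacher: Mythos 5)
Your proposal is correct and follows essentially the same route as the paper: substitute $\widetilde{\mathcal{P}}_{k}^{*}=\widetilde{\mathcal{W}}_{k}\times_{(N+1)}\textbf{L}_{k}^{-1}$ and $\widetilde{\mathcal{P}}_{k}=\widetilde{\mathcal{V}}_{k}\times_{(N+1)}(\textbf{U}_{k}^{-1})^{T}$, apply Lemma \ref{lem2.4} to obtain $\textbf{L}_{k}^{-1}\bigl(\widetilde{\mathcal{W}}_{k}\boxtimes^{(N+1)}\mathcal{L}(\widetilde{\mathcal{H}}_{k})\bigr)\textbf{U}_{k}^{-1}$, and invoke \eqref{eq0.313} together with the $LU$ factorization \eqref{eq0.40} to collapse this to $\textbf{E}_{k}$. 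The only difference is that you spell out the intermediate step $\mathcal{L}^{2}(\widetilde{\mathcal{P}}_{k})=\mathcal{L}(\widetilde{\mathcal{H}}_{k})\times_{(N+1)}(\textbf{U}_{k}^{-1})^{T}$, which the paper leaves implicit.
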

\begin{proof}
According to Lemma \ref{lem2.4} and \eqref{eq0.313}, we have
	\begin{align}
(\widetilde{\mathcal{P}}_{k}^{*}\boxtimes^{(N+1)}\mathcal{L}^{2}(\widetilde{\mathcal{P}}_{k}))_{ij}&=((\widetilde{\mathcal{W}}_{k}\times_{(N+1)}\textbf{L}_{k}^{-1})\boxtimes^{(N+1)}\mathcal{L}^{2}(\widetilde{\mathcal{V}}_{k}\times_{(N+1)}(\textbf{U}_{k}^{-1})^{T}))_{ij}\nonumber\\
&=(\textbf{L}_{k}^{-1}(\widetilde{\mathcal{W}}_{k}\boxtimes^{(N+1)}\mathcal{L}(\widetilde{\mathcal{H}}_{k}))\textbf{U}_{k}^{-1})_{ij}\nonumber\\
&=(\textbf{L}_{k}^{-1}\textbf{T}_{k}\textbf{U}_{k}^{-1})_{ij}\nonumber\\
&=(\textbf{E}_{k})_{ij},\label{eq0.43}
\end{align}
which implies that \eqref{eq0.499} holds.
\end{proof}

For a given initial guess $\mathcal{X}_{0}$, let $\mathcal{R}_{0}=\mathcal{D}-\mathcal{L}(\mathcal{X}_{0})$ and $\mathcal{P}_{0}=\mathcal{R}_{0}$.
Set
\begin{align}	
	\mathcal{X}_{j+1}&=\mathcal{X}_{j}+\alpha_{j}\mathcal{P}_{j},\\
	\mathcal{R}_{j+1}&=\mathcal{R}_{j}-\alpha_{j}\mathcal{L}(\mathcal{P}_{j})\label{eq0.411},\\
	\mathcal{P}_{j+1}&=\mathcal{R}_{j+1}+\beta_{j}\mathcal{P}_{j}\label{eq0.412},\quad j=0,1,....
\end{align}
Similarly, for the dual linear system $\mathcal{L}^{T}(\mathcal{X}^{*})=\mathcal{D}^{*} $, we set
\begin{align}
	\mathcal{R}_{j+1}^{*}&=\mathcal{R}_{j}^{*}-\alpha_{j}\mathcal{L}^{T}(\mathcal{P}_{j}^{*}),\mathcal{R}_{0}^{*}=\mathcal{L}(\mathcal{R}_{0}),\\
	\mathcal{P}_{j+1}^{*}&=\mathcal{R}_{j+1}^{*}+\beta_{j}\mathcal{P}_{j}^{*}\label{eq0.414} \quad for\quad j=0,1,....
\end{align}
Now we determine $\alpha_{j}$ and $\beta_{j}$ in \eqref{eq0.411}-\eqref{eq0.414}. According to \eqref{eq0.411} we have that
\begin{align}
	\langle \mathcal{L}(\mathcal{R}_{j+1}), \mathcal{R}_{j}^{*}\rangle&=\langle \mathcal{L}(\mathcal{R}_{j})-\alpha_{j}\mathcal{L}^{2}(\mathcal{P}_{j}), \mathcal{R}_{j}^{*}\rangle=0,
\end{align}
then by Propositions \ref{pro4.1}, \ref{pro4.2} and \eqref{eq0.411}-\eqref{eq0.414} it holds that
\begin{align}
	\alpha_{j}&=\frac{\langle \mathcal{L}(\mathcal{R}_{j}), \mathcal{R}_{j}^{*}\rangle}{\langle \mathcal{L}^{2}(\mathcal{P}_{j}), \mathcal{R}_{j}^{*}\rangle}
	=\frac{\langle \mathcal{L}(\mathcal{R}_{j}), \mathcal{R}_{j}^{*}\rangle}{\langle \mathcal{L}(\mathcal{P}_{j}), \mathcal{L}^{T}(\mathcal{P}_{j}^{*})\rangle}.\label{eq0.418}
\end{align}
Similarly according to
\begin{align}
	\langle \mathcal{L}^{2}(\mathcal{P}_{j+1}), \mathcal{P}_{j}^{*}\rangle=\langle \mathcal{L}(\mathcal{P}_{j+1}), \mathcal{L}^{T}(\mathcal{P}_{j}^{*})\rangle=0,
\end{align}
we have
\begin{align}
	\beta_{j}&=-\frac{\langle \mathcal{L}(\mathcal{R}_{j+1}), \mathcal{L}^{T}(\mathcal{P}_{j}^{*})\rangle}{\langle \mathcal{L}(\mathcal{P}_{j}), \mathcal{L}^{T}(\mathcal{P}_{j}^{*})\rangle}=\frac{\langle \mathcal{L}(\mathcal{R}_{j+1}), \mathcal{R}_{j+1}^{*}\rangle}{\langle \mathcal{L}(\mathcal{R}_{j}), \mathcal{R}_{j}^{*}\rangle}.\label{eq0.422}
\end{align}

Algorithm \ref{alg3.3} summarizes the biconjugate $\mathcal{L}$-orthogonal residual algorithm in Tensor form for solving \eqref{eq1.1}, which is abbreviated as TBiCOR.

\begin{algorithm}
\caption{TBiCOR: A tensor biconjugate $\mathcal{L}$-orthogonal residual algorithm for solving (\ref{eq1.1})}
\label{alg3.3}
\begin{algorithmic}
\STATE{Compute $\mathcal{R}_{0}=\mathcal{D}-\mathcal{L}(\mathcal{X}_{0})$ ($\mathcal{X}_{0}$ is an initial guess)}
\STATE{Set $\mathcal{R}_{0}^{*}=\mathcal{L}(\mathcal{R}_{0})$}
\STATE{Set $\mathcal{P}_{-1}^{*}=\mathcal{P}_{-1}=0$, $\beta_{-1}=0$}
\FOR{n=0,1,..., until convergence}
\STATE{$\mathcal{P}_{n}=\mathcal{R}_{n}+\beta_{n-1}\mathcal{P}_{n-1}$}
\STATE{$\mathcal{P}_{n}^{*}=\mathcal{R}_{n}^{*}+\beta_{n-1}\mathcal{P}_{n-1}^{*}$}
\STATE{$\mathcal{S}_{n}=\mathcal{L}(\mathcal{P}_{n})$}
\STATE{$\mathcal{S}_{n}^{*}=\mathcal{L}^{T}(\mathcal{P}_{n}^{*})$}
\STATE{$\mathcal{T}_{n}=\mathcal{L}(\mathcal{R}_{n})$}
\STATE{$\alpha_{n}=\frac{\langle \mathcal{R}_{n}^{*}, \mathcal{T}_{n}\rangle}{\langle \mathcal{S}_{n}^{*}, \mathcal{S}_{n}\rangle}$}
\STATE{$\mathcal{X}_{n+1}=\mathcal{X}_{n}+\alpha_{n}\mathcal{P}_{n}$}
\STATE{$\mathcal{R}_{n+1}=\mathcal{R}_{n}-\alpha_{n}\mathcal{S}_{n}$}
\STATE{$\mathcal{R}_{n+1}^{*}=\mathcal{R}_{n}^{*}-\alpha_{n}\mathcal{S}_{n}^{*}$}
\STATE{$\mathcal{T}_{n+1}=\mathcal{L}(\mathcal{R}_{n+1})$}
\STATE{$\beta_{n}=\frac{\langle \mathcal{R}_{n+1}^{*}, \mathcal{T}_{n+1}\rangle}{\langle \mathcal{R}_{n}^{*}, \mathcal{T}_{n}\rangle}$}
\ENDFOR
\end{algorithmic}
\end{algorithm}

We have the following convergence properties on Algorithm \ref{alg3.3}.
\begin{theorem}\label{the3.2}
Assume that the Sylvester tensor equation \eqref{eq1.1} is consistent. For any initial tensor $\mathcal{X}_{0}\in\mathbb{R}^{I_{1}\times I_{2}\times...\times I_{N}}$, Algorithm \ref{alg3.3} converges to an exact solution of \eqref{eq1.1} at most $M=I_{1}\times I_{2}\times...\times I_{N}$ iteration steps in the absence of roundoff errors.
\end{theorem}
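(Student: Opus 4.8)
The plan is to prove finite termination by a linear-independence (dimension-counting) argument built on the $\mathcal{L}$-biorthogonality of the residuals established in Proposition \ref{pro4.1}. The guiding observation is that the ambient tensor space $\mathbb{R}^{I_{1}\times I_{2}\times\cdots\times I_{N}}$ has dimension $M=I_{1}I_{2}\cdots I_{N}$, so it cannot contain more than $M$ linearly independent tensors; since TBiCOR produces residuals that are mutually $\mathcal{L}$-biorthogonal, at most $M$ of them can be nonzero, and this forces $\mathcal{R}_{n}=\mathcal{O}$ (hence $\mathcal{X}_{n}$ an exact solution) for some $n\le M$.

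First I would record that the residual and dual-residual tensors generated by Algorithm \ref{alg3.3} are exactly the ones to which Proposition \ref{pro4.1} applies: the scalars $\alpha_{n},\beta_{n}$ appearing in the algorithm are precisely those derived in \eqref{eq0.418} and \eqref{eq0.422} to enforce $\langle\mathcal{L}(\mathcal{R}_{n+1}),\mathcal{R}_{n}^{*}\rangle=0$ and $\langle\mathcal{L}^{2}(\mathcal{P}_{n+1}),\mathcal{P}_{n}^{*}\rangle=0$. Proposition \ref{pro4.1} then yields the full biorthogonality
\[
\langle\mathcal{L}(\mathcal{R}_{i}),\mathcal{R}_{j}^{*}\rangle=0,\qquad 0\le i\ne j\le M .
\]

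Next I would carry out the independence argument. Suppose, toward a contradiction, that the iteration runs through step $M$ without ever producing a zero residual, so that $\mathcal{R}_{0},\dots,\mathcal{R}_{M}$ are all nonzero and (no breakdown) $\langle\mathcal{L}(\mathcal{R}_{n}),\mathcal{R}_{n}^{*}\rangle\ne0$ for each $n$. Taking a vanishing combination $\sum_{i=0}^{M}c_{i}\mathcal{R}_{i}=\mathcal{O}$, applying the linear operator $\mathcal{L}$, and pairing with $\mathcal{R}_{j}^{*}$ gives, by the biorthogonality relation, $c_{j}\langle\mathcal{L}(\mathcal{R}_{j}),\mathcal{R}_{j}^{*}\rangle=0$, whence $c_{j}=0$ for every $j$. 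Thus $\{\mathcal{R}_{0},\dots,\mathcal{R}_{M}\}$ would be a set of $M+1$ linearly independent tensors in the $M$-dimensional space $\mathbb{R}^{I_{1}\times\cdots\times I_{N}}$, which is impossible. Hence some $\mathcal{R}_{n}=\mathcal{O}$ with $n\le M$, and by consistency of \eqref{eq1.1} the corresponding $\mathcal{X}_{n}$ is an exact solution.

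The step I expect to be the main obstacle is the non-degeneracy assumption $\langle\mathcal{L}(\mathcal{R}_{n}),\mathcal{R}_{n}^{*}\rangle\ne0$ used in the independence argument; this is the usual \emph{no serious breakdown} hypothesis intrinsic to Lanczos-biorthogonalization schemes, since such inner products can in principle vanish while $\mathcal{R}_{n}\ne\mathcal{O}$. In the stated exact-arithmetic setting I would fold it into the convergence claim (the denominators defining $\alpha_{n},\beta_{n}$ are assumed nonzero as long as the method has not yet converged), and observe that if some residual vanishes before step $M$ the conclusion holds immediately, so the only case actually requiring the counting argument is the one in which all these pivots are nonzero through step $M$.
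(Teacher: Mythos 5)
Your proposal is correct and follows essentially the same route as the paper: a dimension-counting contradiction built on the $\mathcal{L}$-biorthogonality of Proposition \ref{pro4.1}, with the no-breakdown condition $\langle\mathcal{L}(\mathcal{R}_{i}),\mathcal{R}_{i}^{*}\rangle\neq 0$ invoked to extract linear independence of $\mathcal{R}_{0},\dots,\mathcal{R}_{M}$ in the $M$-dimensional tensor space. Your explicit discussion of the non-degeneracy (no serious breakdown) hypothesis matches the paper's own implicit assumption and, if anything, states it more carefully.
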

\begin{proof} Suppose that $\mathcal{R}_{k}\neq\mathcal{O}(k=0,1,...,M)$ and
\begin{displaymath}
\sum_{k=0}^{M}\lambda_{k}\mathcal{R}_{k}=\mathcal{O}.
\end{displaymath}
According to Proposition \ref{pro4.1}, we have
\begin{align*}
0&=\langle \mathcal{R}_{i}^{*}, \sum_{k=0}^{M}\lambda_{k}\mathcal{L}(\mathcal{R}_{k})\rangle=\sum_{k=0}^{M}\lambda_{k}\langle\mathcal{R}_{i}^{*}, \mathcal{L}(\mathcal{R}_{k})\rangle\\
&=\lambda_{i}\langle \mathcal{R}_{i}^{*}, \mathcal{L}(\mathcal{R}_{i})\rangle, i=0,1,...,M.
\end{align*}
When Algorithm \ref{alg3.3} does not break down,  $\langle \mathcal{R}_{i}^{*}, \mathcal{L}(\mathcal{R}_{i})\rangle\neq0(i=0,1,...,M)$, which leads to $\lambda_{i}=0(i=0,1,...,M)$. This means that $\mathcal{R}_{0}$,$\mathcal{R}_{1}$,...,$\mathcal{R}_{M}$ are linearly independent, while the  dimension of tensor space $\mathbb{R}^{I_{1}\times I_{2}\times...\times I_{N}}$ is $M$. This is a contradiction. Thus Algorithm \ref{alg3.3} converges to an exact solution within $M$ steps.
\end{proof}

\subsection{The TCORS Algorithm}\label{sec4.2}
This subsection presents an improved method on Algorithm \ref{alg3.3} by introducing a squared operator of the residual of $\mathcal{X}_{n}$ produced by Algorithm \ref{alg3.3}. The proposed method is called the conjugate $\mathcal{L}$-orthogonal residual squared algorithm in tensor form, which is abbreviated as TCORS.

\begin{algorithm}
\caption{TCORS: A tensor conjugate $\mathcal{L}$-orthogonal residual squared algorithm for solving (\ref{eq1.1})}
\label{alg3.5}
\begin{algorithmic}
\STATE{Compute $\mathcal{R}_{0}=\mathcal{D}-\mathcal{L}(\mathcal{X}_{0})$; ($\mathcal{X}_{0}$ is an initial guess)}
\STATE{Set $\mathcal{R}_{0}^{*}=\mathcal{L}(\mathcal{R}_{0})$}
\FOR{n=1,2,..., until convergence}
\STATE{$\mathcal{U}_{0}=\mathcal{R}_{0}$, $\widehat{\mathcal{Z}}=\mathcal{L}(\mathcal{U}_{n-1})$; $\rho_{n-1}=\langle \mathcal{R}_{0}^{*}, \widehat{\mathcal{Z}}\rangle$; $\mathcal{Z}_{n-1}=\mathcal{U}_{n-1}$}
\STATE{if $\rho_{n-1}=0$, stop and reset the initial tensor $\mathcal{X}_{0}$.}
\STATE{if $n=1$}
\STATE{$\mathcal{T}_{0}=\mathcal{U}_{0}$; $\mathcal{D}_{0}=\mathcal{T}_{0}$; $\mathcal{C}_{0}=\widehat{\mathcal{Z}}$; $\mathcal{Q}_{0}=\widehat{\mathcal{Z}}$}
\STATE{else}
\STATE{$\beta_{n-2}=\rho_{n-1}/\rho_{n-2}$; $\mathcal{T}_{n-1}=\mathcal{U}_{n-1}+\beta_{n-2}\mathcal{H}_{n-2}$}
\STATE{$\mathcal{D}_{n-1}=\mathcal{Z}_{n-1}+\beta_{n-2}\mathcal{V}_{n-2}$; $\mathcal{C}_{n-1}=\widehat{\mathcal{Z}}+\beta_{n-2}\mathcal{F}_{n-2}$}
\STATE{$\mathcal{Q}_{n-1}=\mathcal{C}_{n-1}+\beta_{n-2}(\mathcal{F}_{n-2}+\beta_{n-2}\mathcal{Q}_{n-2})$}
\STATE{end if}
\STATE{$\widehat{\mathcal{Q}}=\mathcal{L}(\mathcal{Q}_{n-1})$}
\STATE{$\alpha_{n-1}=\rho_{n-1}/\langle \mathcal{R}_{0}^{*}, \widehat{\mathcal{Q}} \rangle$; $\mathcal{H}_{n-1}=\mathcal{T}_{n-1}-\alpha_{n-1}\mathcal{Q}_{n-1}$}
\STATE{$\mathcal{V}_{n-1}=\mathcal{D}_{n-1}-\alpha_{n-1}\mathcal{Q}_{n-1}$; $\mathcal{F}_{n-1}=\mathcal{C}_{n-1}-\alpha_{n-1}\widehat{\mathcal{Q}}$}
\STATE{$\mathcal{X}_{n}=\mathcal{X}_{n-1}+\alpha_{n-1}(2\mathcal{D}_{n-1}-\alpha_{n-1}\mathcal{Q}_{n-1})$}
\STATE{$\mathcal{U}_{n}=\mathcal{U}_{n-1}-\alpha_{n-1}(2\mathcal{C}_{n-1}-\alpha_{n-1}\widehat{\mathcal{Q}})$}
\ENDFOR
\end{algorithmic}
\end{algorithm}

The residual tensor of $\mathcal{X}_{n}$ produced by Algorithm \ref{alg3.3} can be represented as
\begin{equation}\label{eq3.25}
\begin{aligned}
\mathcal{R}_{n}&=a_{0}\mathcal{R}_{0}+a_{1}\mathcal{L}(\mathcal{R}_{0})+a_{2}\mathcal{L}^{2}(\mathcal{R}_{0})+...+a_{n}\mathcal{L}^{n}(\mathcal{R}_{0})\\
&=(a_{0}\mathcal{L}^{0}+a_{1}\mathcal{L}+a_{2}\mathcal{L}^{2}+...+a_{n}\mathcal{L}^{n})\mathcal{R}_{0},
\end{aligned}
\end{equation}
where $a_{i}$ is determined by Algorithm \ref{alg3.3}. Denote $\varphi_{n}(\mathcal{L})=a_{0}\mathcal{L}^{0}+a_{1}\mathcal{L}+a_{2}\mathcal{L}^{2}+...+a_{n}\mathcal{L}^{n}$, then \eqref{eq3.25} can be represented as
\begin{equation}\label{eq3.251}
\mathcal{R}_{n}=\varphi_{n}(\mathcal{L})\mathcal{R}_{0}.
\end{equation}
Similarly, we have
\begin{equation}\label{eq3.2511}
\mathcal{P}_{n}=\phi_{n}(\mathcal{L})\mathcal{R}_{0},
\end{equation}
where $\phi_{n}(\mathcal{L})=b_{0}\mathcal{L}^{0}+b_{1}\mathcal{L}+b_{2}\mathcal{L}^{2}+...+b_{n}\mathcal{L}^{n}$, and $b_{i}$ can be derived by Algorithm \ref{alg3.3}.
For the directions $\mathcal{R}_{n}^{*}$ and $\mathcal{P}_{n}^{*}$ in Algorithm \eqref{alg3.3}, replacing $\mathcal{L}$ in \eqref{eq3.251} and \eqref{eq3.2511} with $\mathcal{L}^{T}$ results in
\begin{align*}
\mathcal{R}_{n}^{*}=\varphi_{n}(\mathcal{L}^{T})\mathcal{R}_{0}^{*}, \mathcal{P}_{n}^{*}=\phi_{n}(\mathcal{L}^{T})\mathcal{R}_{0}^{*}.
\end{align*}
Thus $\alpha_{n}$ in \eqref{eq0.418} and $\beta_{n}$ in \eqref{eq0.422} can be represented as
\begin{align}\label{eq0.54}
\alpha_{n}=\frac{\langle \mathcal{L}(\varphi_{n}(\mathcal{L})\mathcal{R}_{0}), \varphi_{n}(\mathcal{L}^{T})\mathcal{R}_{0}^{*}\rangle}{\langle\mathcal{L}(\phi_{n}(\mathcal{L})\mathcal{R}_{0}), \mathcal{L}^{T}(\phi_{n}(\mathcal{L}^{T})\mathcal{R}_{0}^{*})\rangle}=\frac{\langle\mathcal{L}(\varphi_{n}^{2}(\mathcal{L})\mathcal{R}_{0}), \mathcal{R}_{0}^{*}\rangle}{\langle\mathcal{L}^{2}(\phi_{n}^{2}(\mathcal{L})\mathcal{R}_{0}), \mathcal{R}_{0}^{*}\rangle},
\end{align}
\begin{align}\label{eq0.5510}
	\beta_{n}=\frac{\langle\varphi_{n+1}(\mathcal{L}^{T})\mathcal{R}_{0}^{*}, \mathcal{L}(\varphi_{n+1}(\mathcal{L})\mathcal{R}_{0}) \rangle}{\langle\varphi_{n}(\mathcal{L}^{T})\mathcal{R}_{0}^{*}, \mathcal{L}(\varphi_{n}(\mathcal{L})\mathcal{R}_{0}) \rangle}=\frac{\langle\mathcal{L}(\varphi_{n+1}^{2}(\mathcal{L})\mathcal{R}_{0}), \mathcal{R}_{0}^{*} \rangle}{\langle \mathcal{L}(\varphi_{n}^{2}(\mathcal{L})\mathcal{R}_{0}), \mathcal{R}_{0}^{*}\rangle}.
\end{align}
According to \eqref{eq0.411}-\eqref{eq0.412}, $\varphi_{j}$ and $\phi_{j}$ can be expressed as
\begin{align}
	\varphi_{j+1}(\mathcal{L})=\varphi_{j}(\mathcal{L})-\alpha_{j}\mathcal{L}(\phi_{j}(\mathcal{L})),\label{eq0.56}\\
	\phi_{j+1}(\mathcal{L})=\varphi_{j+1}(\mathcal{L})+\beta_{j}\phi_{j}(\mathcal{L}),\label{eq0.57}
\end{align}
respectively.
Squaring on both sides of \eqref{eq0.56} and \eqref{eq0.57} results in
\begin{align}	\varphi_{j+1}^{2}(\mathcal{L})&=\varphi_{j}^{2}(\mathcal{L})-2\alpha_{j}\mathcal{L}(\phi_{j}(\mathcal{L})\varphi_{j}(\mathcal{L}))+\alpha_{j}^{2}\mathcal{L}^{2}(\phi_{j}^{2}(\mathcal{L})),\label{eq0.58}\\
\phi_{j+1}^{2}(\mathcal{L})&=\varphi_{j+1}^{2}(\mathcal{L})+2\beta_{j}\varphi_{j+1}(\mathcal{L})\phi_{j}(\mathcal{L})+\beta_{j}^{2}\phi_{j}^{2}(\mathcal{L}).\label{eq0.59}
\end{align}

Furthermore, we have
\begin{align}
	\varphi_{j}(\mathcal{L})\phi_{j}(\mathcal{L})&=\varphi_{j}^{2}(\mathcal{L})+\beta_{j-1}\varphi_{j}(\mathcal{L})\phi_{j-1}(\mathcal{L})\label{eq0.510},\\
	\varphi_{j+1}(\mathcal{L})\phi_{j}(\mathcal{L})&=\varphi_{j}^{2}(\mathcal{L})+\beta_{j-1}\varphi_{j}(\mathcal{L})\phi_{j-1}(\mathcal{L})-\alpha_{j}\mathcal{L}(\phi_{j}^{2}(\mathcal{L})).\label{eq0.511}
\end{align}
Taking \eqref{eq0.510} into \eqref{eq0.58} results in
\begin{align}	\varphi_{j+1}^{2}(\mathcal{L})=\varphi_{j}^{2}(\mathcal{L})-\alpha_{j}\mathcal{L}(2\varphi_{j}^{2}(\mathcal{L})+2\beta_{j-1}\varphi_{j}(\mathcal{L})\phi_{j-1}(\mathcal{L})-\alpha_{j}\mathcal{L}(\phi_{j}^{2}(\mathcal{L}))).\label{eq0.512}
\end{align}
Denote
\begin{align}
	\mathcal{U}_{j}&=\varphi_{j}^{2}(\mathcal{L})\mathcal{R}_{0},\\
	\mathcal{Q}_{j}&=\mathcal{L}(\phi_{j}^{2}(\mathcal{L}))\mathcal{R}_{0},\\
	\mathcal{F}_{j}&=\mathcal{L}(\varphi_{j+1}(\mathcal{L})\phi_{j}(\mathcal{L}))\mathcal{R}_{0},
\end{align}
then
\begin{align}
	\mathcal{U}_{j+1}&=\mathcal{U}_{j}-\alpha_{j}(2\mathcal{L}(\mathcal{U}_{j})+2\beta_{j-1}\mathcal{F}_{j-1}-\alpha_{j}\mathcal{L}(\mathcal{Q}_{j})),\label{eq0.5161}\\
	\mathcal{Q}_{j+1}&=\mathcal{L}(\mathcal{U}_{j+1})+2\beta_{j}\mathcal{F}_{j}+\beta_{j}^{2}\mathcal{Q}_{j},\label{eq0.5171}\\
	\mathcal{F}_{j}&=\mathcal{L}(\mathcal{U}_{j})+\beta_{j-1}\mathcal{F}_{j-1}-\alpha_{j}\mathcal{L}(\mathcal{Q}_{j}).\label{eq0.5181}
\end{align}
Denote
\begin{align}
	\mathcal{L}(\mathcal{U}_{j})+\beta_{j-1}\mathcal{F}_{j-1}=\mathcal{C}_{j},
\end{align}
then \eqref{eq0.5161}-\eqref{eq0.5181} can be represented as
\begin{align}
	\mathcal{U}_{j+1}&=\mathcal{U}_{j}-\alpha_{j}(2\mathcal{C}_{j}-\alpha_{j}\mathcal{L}(\mathcal{Q}_{j})),\\
	\mathcal{Q}_{j+1}&=\mathcal{C}_{j+1}+\beta_{j}\mathcal{F}_{j}+\beta_{j}^{2}\mathcal{Q}_{j},\\
	\mathcal{F}_{j}&=\mathcal{C}_{j}-\alpha_{j}\mathcal{L}(\mathcal{Q}_{j}).
\end{align}

Algorithm \ref{alg3.5} summarizes the TCORS algorithm for solving \eqref{eq1.1}. The following results list the convergence of Algorithm \ref{alg3.5}.

\begin{theorem}\label{the3.20}
Assume the Sylvester tensor equation \eqref{eq1.1} is consistent. For any initial tensor $\mathcal{X}_{0}\in\mathbb{R}^{I_{1}\times I_{2}\times...\times I_{N}}$, the iteration solution $\{\mathcal{X}_{n}\}$ produced by Algorithm \ref{alg3.5} converge to an exact solution of \eqref{eq1.1} at most $M=I_{1}\times I_{2}\times...\times I_{N}$ iteration steps without roundoff errors.
\end{theorem}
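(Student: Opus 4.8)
The plan is to reduce the finite termination of TCORS (Algorithm \ref{alg3.5}) to the already-established finite termination of TBiCOR (Theorem \ref{the3.2}), exploiting the polynomial description of the residuals developed just before Algorithm \ref{alg3.5}. The crucial fact is that the quantity $\mathcal{U}_n$ produced by TCORS is exactly the residual of the iterate $\mathcal{X}_n$ and that it is the \emph{square} of the TBiCOR residual polynomial applied to $\mathcal{R}_0$:
\begin{equation*}
\mathcal{U}_n=\mathcal{D}-\mathcal{L}(\mathcal{X}_n)=\varphi_n^{2}(\mathcal{L})\mathcal{R}_0,
\end{equation*}
where $\mathcal{R}_n=\varphi_n(\mathcal{L})\mathcal{R}_0$ is the residual generated by Algorithm \ref{alg3.3}.

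First I would confirm that $\mathcal{U}_n$ genuinely equals the residual $\mathcal{D}-\mathcal{L}(\mathcal{X}_n)$. This is an induction on $n$: assuming $\mathcal{D}-\mathcal{L}(\mathcal{X}_{n-1})=\mathcal{U}_{n-1}$, I apply $\mathcal{L}$ to the update $\mathcal{X}_n=\mathcal{X}_{n-1}+\alpha_{n-1}(2\mathcal{D}_{n-1}-\alpha_{n-1}\mathcal{Q}_{n-1})$ and match it against the recursion $\mathcal{U}_n=\mathcal{U}_{n-1}-\alpha_{n-1}(2\mathcal{C}_{n-1}-\alpha_{n-1}\widehat{\mathcal{Q}})$; the two agree once one verifies the auxiliary identities $\mathcal{L}(\mathcal{D}_{n-1})=\mathcal{C}_{n-1}$, $\mathcal{L}(\mathcal{Q}_{n-1})=\widehat{\mathcal{Q}}$ and $\mathcal{L}(\mathcal{V}_{n-2})=\mathcal{F}_{n-2}$, each of which propagates through the update formulas of the algorithm. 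Combined with the polynomial recurrences \eqref{eq0.56}--\eqref{eq0.512}, this yields $\mathcal{U}_n=\varphi_n^{2}(\mathcal{L})\mathcal{R}_0$ with the same $\varphi_n$ as in Algorithm \ref{alg3.3}.

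With this identity in hand the conclusion is immediate. By Theorem \ref{the3.2}, Algorithm \ref{alg3.3} reaches an exact solution in at most $M=I_1\times I_2\times\cdots\times I_N$ steps, so there is an index $m\le M$ with $\mathcal{R}_m=\varphi_m(\mathcal{L})\mathcal{R}_0=\mathcal{O}$. Writing $\varphi_m^{2}(\mathcal{L})$ as the composition $\varphi_m(\mathcal{L})\circ\varphi_m(\mathcal{L})$, I then compute
\begin{equation*}
\mathcal{U}_m=\varphi_m^{2}(\mathcal{L})\mathcal{R}_0=\varphi_m(\mathcal{L})\bigl(\varphi_m(\mathcal{L})\mathcal{R}_0\bigr)=\varphi_m(\mathcal{L})\mathcal{R}_m=\varphi_m(\mathcal{L})\mathcal{O}=\mathcal{O}.
\end{equation*}
Hence $\mathcal{D}-\mathcal{L}(\mathcal{X}_m)=\mathcal{O}$, i.e., $\mathcal{X}_m$ is an exact solution of \eqref{eq1.1}, and TCORS terminates within $m\le M$ steps.

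The main obstacle is the bookkeeping in the first step: showing cleanly that the $\mathcal{U}_n$ generated by the concrete recurrences of Algorithm \ref{alg3.5} equals $\varphi_n^{2}(\mathcal{L})\mathcal{R}_0$ with the \emph{same} polynomial $\varphi_n$ that TBiCOR uses. The delicate point is the identification of the TCORS scalars $\alpha_n,\beta_n$ with the TBiCOR scalars: one must verify that $\rho_{n-1}=\langle\mathcal{R}_0^*,\mathcal{L}(\mathcal{U}_{n-1})\rangle$ and $\langle\mathcal{R}_0^*,\widehat{\mathcal{Q}}\rangle$ reproduce exactly the numerator and denominator appearing in \eqref{eq0.54} (and likewise for \eqref{eq0.5510}), so that the recurrence for $\varphi_n$ stays synchronized with that of Algorithm \ref{alg3.3}. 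Once that synchronization is in place, the finite-termination conclusion transfers directly from TBiCOR.
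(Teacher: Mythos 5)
Your proposal is correct, but it follows a genuinely different route from the paper. The paper's own proof is a one-line remark: it repeats the linear-independence and dimension-counting argument of Theorem~\ref{the3.2} verbatim with $\mathcal{R}_{k}$ replaced by $\mathcal{U}_{k}$, i.e.\ it supposes $\mathcal{U}_{0},\dots,\mathcal{U}_{M}$ are all nonzero, pairs a vanishing linear combination against the dual quantities to force all coefficients to zero, and derives a contradiction with $\dim\mathbb{R}^{I_{1}\times\cdots\times I_{N}}=M$; this tacitly relies on the $\mathcal{L}$-orthogonality relations $\langle\mathcal{R}_{i}^{*},\mathcal{L}(\mathcal{U}_{k})\rangle=0$ for $i\neq k$ and on $\rho_{i}=\langle\mathcal{R}_{i}^{*},\mathcal{L}(\mathcal{U}_{i})\rangle\neq0$, which the paper does not spell out for the squared quantities. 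You instead reduce the statement to Theorem~\ref{the3.2} through the identity $\mathcal{U}_{n}=\varphi_{n}^{2}(\mathcal{L})\mathcal{R}_{0}$, which the paper has already established in deriving Algorithm~\ref{alg3.5} (via \eqref{eq0.54}--\eqref{eq0.512}), and then use the commutativity of polynomials in $\mathcal{L}$ to get $\mathcal{U}_{m}=\varphi_{m}(\mathcal{L})\mathcal{R}_{m}=\mathcal{O}$ once $\mathcal{R}_{m}=\mathcal{O}$. Your approach buys a cleaner and arguably stronger conclusion --- TCORS terminates no later than the step at which TBiCOR terminates --- at the cost of the bookkeeping you correctly identify: verifying that $\mathcal{U}_{n}$ is the true residual $\mathcal{D}-\mathcal{L}(\mathcal{X}_{n})$ and that the TCORS scalars $\alpha_{n},\beta_{n}$ coincide with those of TBiCOR so that the same $\varphi_{n}$ appears. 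Both arguments share the same implicit non-breakdown hypothesis ($\rho_{n}\neq0$ and nonzero denominators), so neither is more general on that front.
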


\begin{proof} The proof of Theorem \ref{the3.20} is similar to that of Theorem \ref{the3.2} by replacing $\mathcal{R}_{k}$ with $\mathcal{U}_{k}$, thus is omitted.
\end{proof}

\section{Preconditioned BiCOR and TCORSs Algorithms}\label{sec5}
This section presents two preconditioned methods based on Algorithms \ref{alg3.3}-\ref{alg3.5} for solving Eq.\eqref{eq0.61}.

Using the definition of the Kronecker product in \cite {JL13}, one can transform Eq.\eqref{eq1.1} to its equivalent linear system
\begin{equation}\label{eq0.61}
	\textbf{A}\textbf{x}=\textbf{b},
\end{equation}
where $\textbf{A}=\textbf{E}_{I_{N}}\otimes\dots\otimes\textbf{E}_{I_{2}}\otimes\textbf{A}_{1}+\dots+\textbf{A}_{N}\otimes\textbf{E}_{I_{N-1}}\otimes\dots\otimes\textbf{E}_{I_{1}}$, '$\otimes$' denotes the Kronecker product, $\textbf{x}=vec(\mathcal{X})$, $\textbf{b}=vec(\mathcal{D})$. We refer to \cite{TB09} for more details.

\begin{algorithm}
\caption{PTBiCOR: A preconditioned tensor biconjugate $\tilde{\mathcal{L}}$-orthogonal residual algorithm for solving (\ref{eq1.1})}
\label{alg6}
\begin{algorithmic}
\STATE{Compute matrices $\textbf{Q}_{i}(i=1\dots N)$ and $\tilde{\mathcal{D}}=\mathcal{D}\times_{1}\textbf{Q}_{N}^{-1}\times_{2}\dots\times_{N}\textbf{Q}_{1}^{-1}$}.
\STATE{Replace $\mathcal{L}$, $\mathcal{L}^{T}$ in algorithm \ref{alg3.3} with $\tilde{\mathcal{L}}$, $\tilde{\mathcal{L}}^{T}$, $\tilde{\mathcal{L}}(\mathcal{X})=\mathcal{X}\times_{1}(\textbf{Q}_{N}^{-1}\textbf{A}_{1})\times_{2}\dots\times_{N}\textbf{Q}_{1}^{-1}+\dots+\mathcal{X}\times_{1}\textbf{Q}_{N}^{-1}\times_{2}\dots\times_{N}(\textbf{Q}_{1}^{-1}\textbf{A}_{N})$ and $\tilde{\mathcal{L}}^{T}(\mathcal{X})=\mathcal{X}\times_{1}(\textbf{Q}_{N}^{-1}\textbf{A}_{1})^{T}\times_{2}\dots\times_{N}(\textbf{Q}_{1}^{-1})^{T}+\dots+\mathcal{X}\times_{1}(\textbf{Q}_{N}^{-1})^{T}\times_{2}\dots\times_{N}(\textbf{Q}_{1}^{-1}\textbf{A}_{N})^{T}$}.
\STATE{Compute $\mathcal{R}_{0}=\tilde{\mathcal{D}}-\tilde{\mathcal{L}}(\mathcal{X}_{0})$ ($\mathcal{X}_{0}$ is an initial guess)}
\STATE{Set $\mathcal{R}_{0}^{*}=\tilde{\mathcal{L}}(\mathcal{R}_{0})$}
\STATE{Set $\mathcal{P}_{-1}^{*}=\mathcal{P}_{-1}=0$, $\beta_{-1}=0$}
\FOR{n=0,1,..., until convergence}
\STATE{$\mathcal{P}_{n}=\mathcal{R}_{n}+\beta_{n-1}\mathcal{P}_{n-1}$}
\STATE{$\mathcal{P}_{n}^{*}=\mathcal{R}_{n}^{*}+\beta_{n-1}\mathcal{P}_{n-1}^{*}$}
\STATE{$\mathcal{S}_{n}=\tilde{\mathcal{L}}(\mathcal{P}_{n})$}
\STATE{$\mathcal{S}_{n}^{*}=\tilde{\mathcal{L}}^{T}(\mathcal{P}_{n}^{*})$}
\STATE{$\mathcal{T}_{n}=\tilde{\mathcal{L}}(\mathcal{R}_{n})$}
\STATE{$\alpha_{n}=\frac{\langle \mathcal{R}_{n}^{*}, \mathcal{T}_{n}\rangle}{\langle \mathcal{S}_{n}^{*}, \mathcal{S}_{n}\rangle}$}
\STATE{$\mathcal{X}_{n+1}=\mathcal{X}_{n}+\alpha_{n}\mathcal{P}_{n}$}
\STATE{$\mathcal{R}_{n+1}=\mathcal{R}_{n}-\alpha_{n}\mathcal{S}_{n}$}
\STATE{$\mathcal{R}_{n+1}^{*}=\mathcal{R}_{n}^{*}-\alpha_{n}\mathcal{S}_{n}^{*}$}
\STATE{$\mathcal{T}_{n+1}=\tilde{\mathcal{L}}(\mathcal{R}_{n+1})$}
\STATE{$\beta_{n}=\frac{\langle \mathcal{R}_{n+1}^{*}, \mathcal{T}_{n+1}\rangle}{\langle \mathcal{R}_{n}^{*}, \mathcal{T}_{n}\rangle}$}
\ENDFOR
\end{algorithmic}
\end{algorithm}
We are interested in constructing a preconditioner \textbf{M} that transforms Eq.\eqref{eq1.1} to a new system
\begin{equation}\label{eq0.62}
	\textbf{M}\textbf{A}\textbf{x}=\textbf{M}\textbf{b},
\end{equation}
which has the same solution with Eq.\eqref{eq0.61} and has better spectral properties than Eq.\eqref{eq0.61} does. In particular, if $\textbf{M}$ is a good approximation of $\textbf{A}^{-1}$, then Eq.\eqref{eq0.62} can be solved more effectively than Eq.\eqref{eq0.61}. Using the nearest Kronecker product (NKP) in \cite{VP93}, Chen and Lu \cite{CL12} presented an efficient preconditioner for solving Eq.\eqref{eq0.61} based on GMRES in tensor form, which is abbreviated as preconditioned GMRES (PGMRES) later. Zhang and Wang in \cite{ZW21} gave a preconditioned BiCG (PBiCG) and a preconditioned BiCR (PBiCR) based on NKP in \cite{VP93}.
The preconditioner based on NKP approximates $\textbf{A}^{-1}$ by $\textbf{Q}_{1}^{-1}\otimes\textbf{Q}_{2}^{-1}\otimes\dots\otimes\textbf{Q}_{N}^{-1}$ with
\begin{equation}\label{eq0.620}
\left\{
             \begin{array}{lr}
             \textbf{Q}_{1}\approx a_{11}\textbf{A}_{N}+a_{12}\textbf{E}_{I_{N}},  \\
             \textbf{Q}_{2}\approx a_{21}\textbf{A}_{N-1}+a_{22}\textbf{E}_{I_{N-1}}, \\
             \vdots\\
             \textbf{Q}_{N}\approx a_{N1}\textbf{A}_{1}+a_{N2}\textbf{E}_{1},
             \end{array}
\right.
\end{equation}
where the optimal parameters $a_{ij}$ in \eqref{eq0.620} can be computed by using the nonlinear optimization software, such as \emph{fminsearch} in MATLAB.

Introducing the preconditioner based on NKP to Algorithms \ref{alg3.3} and \ref{alg3.5}, we get our preconditioned TBiCOR (PTLB) algorithm and  preconditioned TCORS (TCORS) algorithm for solving Eq \eqref{eq1.1}, which are summarized in Algorithms \ref{alg6} and \ref{alg7}, respectively.

\begin{algorithm}
\caption{PTCORS: A preconditioned tensor conjugate $\tilde{\mathcal{L}}$-orthogonal residual squared algorithm for solving (\ref{eq1.1})}
\label{alg7}
\begin{algorithmic}
\STATE{Compute matrices $\textbf{Q}_{i}(i=1\dots N)$ and $\tilde{\mathcal{D}}=\mathcal{D}\times_{1}\textbf{Q}_{N}^{-1}\times_{2}\dots\times_{N}\textbf{Q}_{1}^{-1}$}.
\STATE{Replace $\mathcal{L}$ in algorithm \ref{alg3.5} with $\tilde{\mathcal{L}}$, $\tilde{\mathcal{L}}(\mathcal{X})=\mathcal{X}\times_{1}(\textbf{Q}_{N}^{-1}\textbf{A}_{1})\times_{2}\dots\times_{N}\textbf{Q}_{1}^{-1}+\dots+\mathcal{X}\times_{1}\textbf{Q}_{N}^{-1}\times_{2}\dots\times_{N}(\textbf{Q}_{1}^{-1}\textbf{A}_{N})$}.
\STATE{Compute $\mathcal{R}_{0}=\tilde{\mathcal{D}}-\tilde{\mathcal{L}}(\mathcal{X}_{0})$; ($\mathcal{X}_{0}$ is an initial guess)}
\STATE{Set $\mathcal{R}_{0}^{*}=\tilde{\mathcal{L}}(\mathcal{R}_{0})$}
\FOR{n=1,2,..., until convergence}
\STATE{$\mathcal{U}_{0}=\mathcal{R}_{0}$, $\widehat{\mathcal{Z}}=\tilde{\mathcal{L}}(\mathcal{U}_{n-1})$; $\rho_{n-1}=\langle \mathcal{R}_{0}^{*}, \widehat{\mathcal{Z}}\rangle$; $\mathcal{Z}_{n-1}=\mathcal{U}_{n-1}$}
\STATE{if $\rho_{n-1}=0$, stop and reset the initial tensor $\mathcal{X}_{0}$.}
\STATE{if $n=1$}
\STATE{$\mathcal{T}_{0}=\mathcal{U}_{0}$; $\mathcal{D}_{0}=\mathcal{T}_{0}$; $\mathcal{C}_{0}=\widehat{\mathcal{Z}}$; $\mathcal{Q}_{0}=\widehat{\mathcal{Z}}$}
\STATE{else}
\STATE{$\beta_{n-2}=\rho_{n-1}/\rho_{n-2}$; $\mathcal{T}_{n-1}=\mathcal{U}_{n-1}+\beta_{n-2}\mathcal{H}_{n-2}$}
\STATE{$\mathcal{D}_{n-1}=\mathcal{Z}_{n-1}+\beta_{n-2}\mathcal{V}_{n-2}$; $\mathcal{C}_{n-1}=\widehat{\mathcal{Z}}+\beta_{n-2}\mathcal{F}_{n-2}$}
\STATE{$\mathcal{Q}_{n-1}=\mathcal{C}_{n-1}+\beta_{n-2}(\mathcal{F}_{n-2}+\beta_{n-2}\mathcal{Q}_{n-2})$}
\STATE{end if}
\STATE{$\widehat{\mathcal{Q}}=\tilde{\mathcal{L}}(\mathcal{Q}_{n-1})$}
\STATE{$\alpha_{n-1}=\rho_{n-1}/\langle \mathcal{R}_{0}^{*}, \widehat{\mathcal{Q}} \rangle$; $\mathcal{H}_{n-1}=\mathcal{T}_{n-1}-\alpha_{n-1}\mathcal{Q}_{n-1}$}
\STATE{$\mathcal{V}_{n-1}=\mathcal{D}_{n-1}-\alpha_{n-1}\mathcal{Q}_{n-1}$; $\mathcal{F}_{n-1}=\mathcal{C}_{n-1}-\alpha_{n-1}\widehat{\mathcal{Q}}$}
\STATE{$\mathcal{X}_{n}=\mathcal{X}_{n-1}+\alpha_{n-1}(2\mathcal{D}_{n-1}-\alpha_{n-1}\mathcal{Q}_{n-1})$}
\STATE{$\mathcal{U}_{n}=\mathcal{U}_{n-1}-\alpha_{n-1}(2\mathcal{C}_{n-1}-\alpha_{n-1}\widehat{\mathcal{Q}})$}
\ENDFOR
\end{algorithmic}
\end{algorithm}

We only give the convergence of Algorithm \ref{alg6}. Similarly we can obtain the convergence of Algorithm \ref{alg7}, thus omit it.
\begin{theorem}\label{the0.4}
Let $\{\mathcal{R}_{i}^{*}\}$, $\{\mathcal{R}_{i}^{*}\}$, $\{\mathcal{P}_{i}\}$ and $\{\mathcal{P}_{i}^{*}\}(i=0,1,...,k)$ be the iterative sequences given by Algorithm \ref{alg6}, then we have
\begin{equation}\label{eq0.63}
\langle \tilde{\mathcal{L}}(\mathcal{R}_{i}), \mathcal{R}_{j}^{*}\rangle=0
\end{equation}
and
\begin{equation}\label{eq0.64}
\langle \tilde{\mathcal{L}}(\mathcal{P}_{i}), \tilde{\mathcal{L}}^{T}(\mathcal{P}_{j}^{*})\rangle=0 (i,j=0,1,...,k,i\neq j).
\end{equation}
\end{theorem}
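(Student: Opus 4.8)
The plan is to exploit the fact that Algorithm \ref{alg6} is nothing but Algorithm \ref{alg3.3} applied to the preconditioned equation $\tilde{\mathcal{L}}(\mathcal{X})=\tilde{\mathcal{D}}$, so that the entire development of Section \ref{sec4} transfers once I confirm that $\tilde{\mathcal{L}}^{T}$ is genuinely the adjoint of $\tilde{\mathcal{L}}$ with respect to $\langle\cdot,\cdot\rangle$. First I would verify this adjoint relation: each summand of $\tilde{\mathcal{L}}$ has the form $\mathcal{X}\mapsto\mathcal{X}\times_{1}\textbf{B}_{1}\times_{2}\cdots\times_{N}\textbf{B}_{N}$, and applying \eqref{eq2.1} once in every mode shows its adjoint is $\mathcal{Y}\mapsto\mathcal{Y}\times_{1}\textbf{B}_{1}^{T}\times_{2}\cdots\times_{N}\textbf{B}_{N}^{T}$; summing over the $N$ terms reproduces exactly the $\tilde{\mathcal{L}}^{T}$ written in Algorithm \ref{alg6}, hence $\langle\tilde{\mathcal{L}}(\mathcal{X}),\mathcal{Y}\rangle=\langle\mathcal{X},\tilde{\mathcal{L}}^{T}(\mathcal{Y})\rangle$ for all $\mathcal{X},\mathcal{Y}$.

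With the adjoint relation in hand, the identities \eqref{eq0.63} and \eqref{eq0.64} are the literal analogues of Propositions \ref{pro4.1} and \ref{pro4.2} with $\mathcal{L}$ replaced by $\tilde{\mathcal{L}}$; since those propositions used only the linearity of the operator and the adjoint pairing, never any special structure of $\mathcal{L}$, they apply to $\tilde{\mathcal{L}}$ verbatim. For a self-contained proof I would instead run a simultaneous induction on the iteration index. Keeping the algorithm's notation $\mathcal{S}_{n}=\tilde{\mathcal{L}}(\mathcal{P}_{n})$ and $\mathcal{S}_{n}^{*}=\tilde{\mathcal{L}}^{T}(\mathcal{P}_{n}^{*})$, I assume both \eqref{eq0.63} and \eqref{eq0.64} hold for all indices up to $n$ and prove them for $n+1$. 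Using $\mathcal{R}_{n+1}=\mathcal{R}_{n}-\alpha_{n}\mathcal{S}_{n}$ together with the adjoint relation, the case $j=n$ reduces to
\[
\langle\tilde{\mathcal{L}}(\mathcal{R}_{n+1}),\mathcal{R}_{n}^{*}\rangle=\langle\tilde{\mathcal{L}}(\mathcal{R}_{n}),\mathcal{R}_{n}^{*}\rangle-\alpha_{n}\langle\tilde{\mathcal{L}}(\mathcal{P}_{n}),\tilde{\mathcal{L}}^{T}(\mathcal{R}_{n}^{*})\rangle .
\]
Substituting $\mathcal{R}_{n}^{*}=\mathcal{P}_{n}^{*}-\beta_{n-1}\mathcal{P}_{n-1}^{*}$ and invoking the inductive form of \eqref{eq0.64} to annihilate the cross term, the second inner product becomes $\langle\mathcal{S}_{n},\mathcal{S}_{n}^{*}\rangle$, so the right-hand side vanishes precisely by the definition of $\alpha_{n}$. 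The companion relation \eqref{eq0.64} for the directions is obtained the same way from $\mathcal{P}_{n+1}=\mathcal{R}_{n+1}+\beta_{n}\mathcal{P}_{n}$ and the definition of $\beta_{n}$, while the far-off cases $j<n-1$ follow by expanding $\mathcal{R}_{j}^{*}$ (resp. $\mathcal{P}_{j}^{*}$) through the three-term recurrences and applying the induction hypothesis term by term.

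The main obstacle is the coupling of the two relations: each step of the induction for \eqref{eq0.63} consumes the already-established instance of \eqref{eq0.64} at the same level, to convert $\mathcal{R}^{*}$-inner products into $\mathcal{P}^{*}$-inner products and back, and conversely, so the two statements must be carried together rather than separately, and the boundary indices $j=n$ and $j=n-1$ must be isolated and handled by hand before the generic $j<n-1$ telescoping applies. A secondary point to record explicitly is the no-breakdown hypothesis: $\alpha_{n}$ and $\beta_{n}$ are well defined only when the denominators $\langle\mathcal{S}_{n}^{*},\mathcal{S}_{n}\rangle$ and $\langle\mathcal{R}_{n}^{*},\mathcal{T}_{n}\rangle$ are nonzero, which I would assume throughout exactly as in Theorem \ref{the3.2}. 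Since no tensor identity beyond the adjoint pairing \eqref{eq2.1} enters, the argument is a faithful transcription of the unpreconditioned case.
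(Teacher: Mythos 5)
Your proposal is correct and its main route coincides with the paper's: the paper proves Theorem \ref{the0.4} in one line by declaring it identical to Propositions \ref{pro4.1} and \ref{pro4.2} with $\mathcal{L}$ replaced by $\tilde{\mathcal{L}}$, which is exactly your transfer argument, and your explicit check via \eqref{eq2.1} that $\tilde{\mathcal{L}}^{T}$ is genuinely the adjoint of $\tilde{\mathcal{L}}$ supplies the one hypothesis that transfer needs (the paper leaves it implicit). The only divergence is your optional self-contained induction: the paper's Propositions \ref{pro4.1}--\ref{pro4.2} are not proved inductively but by identifying $\mathcal{R}_{i}$ with a scalar multiple of the Lanczos tensor $\mathcal{V}_{i+1}$ (and $\mathcal{R}_{j}^{*}$ with $\mathcal{W}_{j+1}$) and invoking the $\mathcal{L}$-biorthogonality of Proposition \ref{pro0.31}, whereas your CG-style coupled induction avoids that machinery at the cost of tracking the two recurrences together; both are valid under the same no-breakdown assumption you correctly flag.
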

\begin{proof}
The proof is very similar to those of Propositions \eqref{pro4.1} and \eqref{pro4.2} with the operator $\mathcal{L}$ being replaced by $\tilde{\mathcal{L}}$ in Algorithm \ref{alg6}, and is omitted.
\end{proof}

\begin{theorem}\label{the0.5}
Assume that the Sylvester tensor equation \eqref{eq1.1} is consistent. For any initial tensor $\mathcal{X}_{0}\in\mathbb{R}^{I_{1}\times I_{2}\times...\times I_{N}}$, Algorithm \ref{alg6} converges to an exact solution of \eqref{eq1.1} at most $M=I_{1}\times I_{2}\times...\times I_{N}$ iteration steps in the absence of roundoff errors.
\end{theorem}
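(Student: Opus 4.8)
The plan is to mirror the proof of Theorem \ref{the3.2} exactly, since Theorem \ref{the0.5} is simply the preconditioned analogue of the unpreconditioned convergence result. The essential observation is that Algorithm \ref{alg6} is obtained from Algorithm \ref{alg3.3} by the uniform substitution $\mathcal{L}\mapsto\tilde{\mathcal{L}}$ and $\mathcal{L}^{T}\mapsto\tilde{\mathcal{L}}^{T}$. Theorem \ref{the0.4} already supplies the preconditioned biorthogonality relations \eqref{eq0.63}, which are the precise counterparts of Proposition \ref{pro4.1} with $\mathcal{L}$ replaced by $\tilde{\mathcal{L}}$. Since the finite-termination argument of Theorem \ref{the3.2} used \emph{only} the biorthogonality of the residuals and the non-breakdown assumption, I expect the entire proof to carry over verbatim once $\tilde{\mathcal{L}}$ takes the place of $\mathcal{L}$.

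Concretely, I would proceed as follows. First I would suppose, for contradiction, that the residual tensors $\mathcal{R}_{k}\neq\mathcal{O}$ for all $k=0,1,\dots,M$ and that there exist scalars $\lambda_{k}$ with $\sum_{k=0}^{M}\lambda_{k}\mathcal{R}_{k}=\mathcal{O}$. I would then apply $\tilde{\mathcal{L}}$ to this linear combination and pair it, in the inner product, against each dual residual $\mathcal{R}_{i}^{*}$. Invoking the preconditioned orthogonality \eqref{eq0.63} from Theorem \ref{the0.4}, every cross term with $k\neq i$ vanishes, leaving
\begin{displaymath}
0=\Big\langle \mathcal{R}_{i}^{*},\ \sum_{k=0}^{M}\lambda_{k}\tilde{\mathcal{L}}(\mathcal{R}_{k})\Big\rangle=\lambda_{i}\,\langle \mathcal{R}_{i}^{*},\ \tilde{\mathcal{L}}(\mathcal{R}_{i})\rangle,\qquad i=0,1,\dots,M.
\end{displaymath}
So long as Algorithm \ref{alg6} does not break down, the quantities $\langle \mathcal{R}_{i}^{*}, \tilde{\mathcal{L}}(\mathcal{R}_{i})\rangle$ are nonzero (these are exactly the denominators $\rho$-type quantities governing $\alpha_{n}$ and $\beta_{n}$ in the algorithm), forcing $\lambda_{i}=0$ for every $i$.

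From $\lambda_{i}=0$ for all $i$ I would conclude that $\mathcal{R}_{0},\mathcal{R}_{1},\dots,\mathcal{R}_{M}$ are $M+1$ linearly independent tensors. Since the ambient space $\mathbb{R}^{I_{1}\times I_{2}\times\cdots\times I_{N}}$ has dimension $M=I_{1}\times I_{2}\times\cdots\times I_{N}$, it cannot contain $M+1$ linearly independent elements, which is the desired contradiction. Hence some $\mathcal{R}_{k}=\mathcal{O}$ for $k\leq M$, i.e. Algorithm \ref{alg6} reaches an exact solution of the preconditioned system within $M$ steps; because the preconditioned system \eqref{eq0.62} shares the solution set of \eqref{eq1.1}, this is also an exact solution of \eqref{eq1.1}.

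The argument is essentially bookkeeping, so there is no deep obstacle; the one point requiring care is the justification that the preconditioning does not alter the solution set, namely that $\tilde{\mathcal{L}}$ is invertible precisely when $\mathcal{L}$ is (equivalently, that the $\textbf{Q}_{i}$ are nonsingular so that $\mathcal{L}(\mathcal{X})=\mathcal{D}$ and $\tilde{\mathcal{L}}(\mathcal{X})=\tilde{\mathcal{D}}$ are equivalent). Given this and the non-breakdown hypothesis already built into the statement, the cleanest presentation is to state that the proof is identical to that of Theorem \ref{the3.2} with $\mathcal{L}$ replaced by $\tilde{\mathcal{L}}$ and with Proposition \ref{pro4.1} replaced by Theorem \ref{the0.4}, and then omit the repeated computation.
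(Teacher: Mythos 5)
Your proposal matches the paper exactly: the paper's own proof of Theorem \ref{the0.5} consists of the single remark that the argument of Theorem \ref{the3.2} carries over with $\mathcal{L}$ replaced by $\tilde{\mathcal{L}}$, which is precisely the substitution-and-linear-independence argument you spell out. Your added note that the nonsingularity of the $\textbf{Q}_{i}$ is needed so that the preconditioned system shares its solution set with \eqref{eq1.1} is a reasonable point of care that the paper leaves implicit.
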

\begin{proof}The proof is similar to that of Theorem \ref{the3.2} by replacing $\mathcal{L}$ with $\tilde{\mathcal{L}}$ and is omitted.
\end{proof}

We can also obtain a preconditioned TLB (PTLB) by introducing the NKP preconditioner in \cite{CL12} to Algorithm \ref{alg3.2}, which is listed in Algorithm \ref{alg5}.

\begin{algorithm}
\caption{PTLB: A preconditioned tensor Lanczos $\tilde{\mathcal{L}}$-biorthogonalization Algorithm for solving \eqref{eq1.1}}
\label{alg5}
\begin{algorithmic}
\STATE{Compute matrices $\textbf{Q}_{i}(i=1\dots N)$ and $\tilde{\mathcal{D}}=\mathcal{D}\times_{1}\textbf{Q}_{N}^{-1}\times_{2}\dots\times_{N}\textbf{Q}_{1}^{-1}$}.
\STATE{Replace $\mathcal{L}$, $\mathcal{L}^{T}$ in algorithm \ref{alg0.1}, \ref{alg3.2} with $\tilde{\mathcal{L}}$, $\tilde{\mathcal{L}}^{T}$}, $\tilde{\mathcal{L}}(\mathcal{X})=\mathcal{X}\times_{1}(\textbf{Q}_{N}^{-1}\textbf{A}_{1})\times_{2}\dots\times_{N}\textbf{Q}_{1}^{-1}+\dots+\mathcal{X}\times_{1}\textbf{Q}_{N}^{-1}\times_{2}\dots\times_{N}(\textbf{Q}_{1}^{-1}\textbf{A}_{N})$ and $\tilde{\mathcal{L}}^{T}(\mathcal{X})=\mathcal{X}\times_{1}(\textbf{Q}_{N}^{-1}\textbf{A}_{1})^{T}\times_{2}\dots\times_{N}(\textbf{Q}_{1}^{-1})^{T}+\dots+\mathcal{X}\times_{1}(\textbf{Q}_{N}^{-1})^{T}\times_{2}\dots\times_{N}(\textbf{Q}_{1}^{-1}\textbf{A}_{N})^{T}$.
\STATE{Choose an initial tensor $\mathcal{X}_{0}$ and compute $\mathcal{R}_{0}=\tilde{\mathcal{D}}-\tilde{\mathcal{L}}(\mathcal{X}_{0})$}.
\STATE{Set $\mathcal{V}_{1}=\frac{\mathcal{R}_{0}}{\|\mathcal{R}_{0}\|}$, choose a tensor $\mathcal{W}_{1}$ such that $\langle\tilde{\mathcal{L}}(\mathcal{V}_{1}), \mathcal{W}_{1}\rangle=1$.}
\FOR{$m=1,2,...$ until convergence}
\STATE{Compute Lanczos $\tilde{\mathcal{L}}$-Biorthogonalization tensors $\mathcal{V}_{1},...,\mathcal{V}_{m}$, $\mathcal{W}_{1},...,\mathcal{W}_{m}$ and $\textbf{T}_{m}$ by Algorithm \ref{alg0.1}.}
\STATE{Compute $\textbf{y}_{m}$ by \eqref{eq0.319}.}
\ENDFOR
\STATE{Compute the solution $\mathcal{X}_{m}$ of \eqref{eq1.1} by \eqref{eq0.317}.}
\end{algorithmic}
\end{algorithm}

\section{Numerical Experiments}\label{sec6}
In this section, we show several numerical examples to illustrate Algorithms \ref{alg3.2}--\ref{alg5} and compare them with CGLS in \cite{HM20}, MCG in \cite{LM20}, preconditioned GMRES (PGMRES) in \cite{CL12}, preconditioned BiCG (PBiCG) and preconditioned BiCR (PBiCR) in \cite{ZW21}. All experiments are implemented on a computer with macOS Big Sur 11.1 and 8G memory. The MATLAB R2018a (9.4.0) is used to run all examples. All algorithms are stopped when the relative error $r_{k}=\|\mathcal{X}_{k}-\mathcal{X}^{*}\|/\|\mathcal{X}^{*}\|<10^{-10}$, where $\mathcal{X}^{*}$ is assumed to be an exact solution of \eqref{eq1.1}.
\begin{example}\label{ex4.1}
In this example, we consider the Poisson equation in $d$-dimensional space \cite{JL13}
\begin{equation}\nonumber
\left\{
             \begin{array}{lr}
             -\triangle u=f,\quad  &in\quad\Omega=(0,1)^{d},\\
             u=0,\quad  &on\quad\partial\Omega.
             \end{array}
\right.
\end{equation}
A finite difference discretization leads to the Sylvester tensor equation \eqref{eq1.1}, where $\textbf{A}_{i}\in\mathbb{R}^{10\times10}( i=1,2,...,d)$ are
\begin{equation}\label{eq0.71}
\textbf{A}_{i}=\frac{1}{h^{2}}\begin{pmatrix}2& -1 &  &  &\\
 -1& 2 &-1 & & \\
  & \ddots & \ddots & \ddots & \\
  &  &  -1& 2 &-1 \\
  &  &  & -1 & 2\\
  \end{pmatrix}_{10\times10}
\end{equation}
with the mesh-width $h=\frac{1}{11}$.
\end{example}

\begin{figure}[htbp]
  \begin{center}
    \includegraphics[scale=0.5]{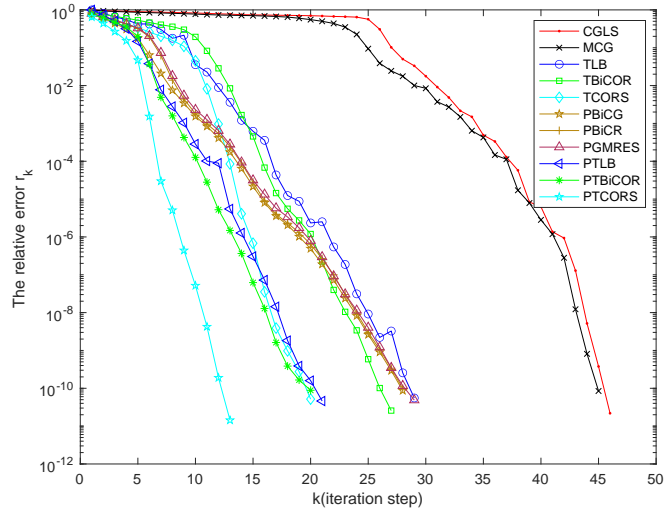}
    \caption{Plot of $r_{k}$ for Example \ref{ex4.1}.}\label{figa}
  \end{center}
\end{figure}

We set $d=3$ and let the initial tensor $\mathcal{X}_{0}=\mathcal{O}$. The right-hand side $\mathcal{D}$ of \eqref{eq1.1} is constructed by \eqref{eq1.1} with the exact solution $\mathcal{X}^{*}$ of \eqref{eq1.3} derived by the MATLAB command $tenones(10,10,10)$ in \cite{WG12}. Algorithms \ref{alg3.2}-\ref{alg7} are used to solve \eqref{eq1.1} with the matrices $A_{i}$ in \eqref{eq0.71}. These methods are compared with CGLS in \cite{HM20} and MCG in \cite{LM20}, respectively.

Figure \ref{figa} shows the convergence of the relative error $r_{k}$ versus the number of iterations for all methods. From Figure \ref{figa}, we can see that our preconditioned Algorithms \ref{alg6}-\ref{alg5} present better convergence than Algorithms \ref{alg3.2}-\ref{alg3.5} without preconditioning, PGMRES in \cite{CL12}, PBiCG and PBiCR in \cite{ZW21}. While Algorithms \ref{alg3.2}-\ref{alg3.5} can compare with PGMRES, PBiCG and PBiCR, and are better than CGLS \cite{HM20} and MCG \cite{LM20}. Algorithm \ref{alg7} converges fastest among all algorithms. Algorithm \ref{alg6} converges the second fastest among all algorithms.

\begin{example}\label{ex4.2}
Consider the convection-diffusion equation in \cite{JL13,HL10}
\begin{equation}\nonumber
\left\{
 \begin{array}{lr}
             v\Delta u+c^{T}\nabla u=f,\quad &in\quad\Omega=[0,1]^{N},\\
             u=0,\quad &on\quad \partial\Omega.
 \end{array}
\right.
\end{equation}
A standard finite difference discretization on equidistant nodes combined with the second order convergent scheme \cite{L04,DC10} for the convection term leads to the linear system \eqref{eq1.1} with
\begin{equation}\label{eq5.5}
\textbf{A}_{n}=\frac{v}{h^{2}}\begin{pmatrix}2& -1 &  &  &\\
 -1& 2 &-1 & & \\
  & \ddots & \ddots & \ddots & \\
  &  &  -1& 2 &-1 \\
  &  &  & -1 & 2\\
  \end{pmatrix}+
  \frac{c_{n}}{4h}\begin{pmatrix}3& -5 & 1 &  &\\
 1& 3 &-5 &1 & \\
  & \ddots & \ddots & \ddots &1 \\
  &  &  1& 3 &-5 \\
  &  &  & 1 & 3\\
  \end{pmatrix}_{p\times p},
\end{equation}
where $n=1,2,...,N$ and the mesh-size $h=\frac{1}{p+1}$.
\end{example}

\begin{table}[htbp]
{\footnotesize
  \caption{Comparison of the running time, total iteration number and the corresponding relative error for different method with different parameters when the criterion is satisfied for Example \ref{ex4.2}}  \label{tab:foo}
\begin{center}
\setlength{\tabcolsep}{1mm}{
  \begin{tabular}{|c|c|c|c|c|c|c|c|c|} \hline
    & Methods & time(s) & TIN & $r_{TIN}$ & Methods & time(s) & TIN & $r_{TIN}$\\ \hline
    $v=1 $ &CGLS &0.422117 &131 &9.7291e-11&PBiCG&0.407984  &27& 5.7115e-11  \\
    $c_{1}=1$ &MCG&0.314574  &130& 9.1914e-11 &PBiCR &0.333801 &27 &6.2653e-11\\
    $c_{2}=1$ & & & & &PGMRES &1.195048 &26 & 8.4492e-11\\
    $c_{3}=1 $ &TLB&0.439216  &48& 8.0006e-11 &PTLB &0.413785 &25 &4.4981e-11\\
    $$ &TBiCOR&0.379316  &48& 1.2006e-11 &PTBiCOR &0.283238 &24 &4.5384e-11  \\
 &TCORS&  0.235870 &32& 7.9107e-11 &PTCORS &0.202668 &15 &7.9490e-12  \\ \hline
$v=0.1$ &CGLS &0.420019 &142& 9.7422e-11 &PBiCG&0.327606   &39 &7.7596e-11  \\
$c_{1}=1 $&MCG&0.349199   &141& 9.1934e-11 &PBiCR &0.365583 &39 &7.7732e-11\\
$c_{2}=1$ & & & & &PGMRES & 2.643044 &37 &7.6420e-11\\
$c_{3}=1 $ &TLB&0.547317  &57& 2.2617e-11  &PTLB &0.452652 &24 &6.9294e-11\\
$ $ &TBiCOR&0.275463  &51& 6.4086e-11 &PTBiCOR &0.240380 &22 &9.2513e-11 \\
 &TCORS&0.199584   &30& 3.6561e-11 &PTCORS &0.167069 &13 &1.5901e-11 \\ \hline
$v=0.01$ &CGLS &0.400485 &137& 7.8655e-11 &PBiCG&0.345495   &23 &5.9009e-12  \\
$c_{1}=1$ &MCG&0.314983   &136& 7.9150e-11 &PBiCR &0.420298 &23 &5.1618e-12\\
$c_{2}=1$ & & & & &PGMRES &0.716276 &20 &2.9726e-11\\
$c_{3}=1 $ &TLB&  0.490665 &53& 4.5843e-11 &PTLB &0.412092 &24 &1.3665e-12 \\
$ $ &TBiCOR&0.431553  &49& 2.6997e-11 &PTBiCOR &0.271265 &22 &6.3548e-11 \\
  &TCORS&0.289261  &29& 4.6591e-11  &PTCORS &0.182132 &14 &1.4634e-11 \\ \hline
$v=1$ &CGLS &0.619692 &231& 9.8161e-11 &PBiCG&0.302540   &24& 9.6344e-11\\
$c_{1}=1 $&MCG&0.479290   &228& 9.5604e-11 &PBiCR &0.332252 &25 &1.8973e-11\\
$c_{2}=2$ & & & & &PGMRES &0.969060 &23 &8.3597e-11\\
$c_{3}=3 $ &TLB&0.574761  &60& 6.1992e-11  &PTLB &0.463983 &25 &6.7805e-11\\
$ $  &TBiCOR& 0.359788  &59& 6.2755e-11  &PTBiCOR &0.294740 &25 &3.1182e-11\\
  &TCORS&0.259970   &33& 9.4662e-11 &PTCORS &0.204611 &15 &1.2034e-11\\ \hline
$v=0.1$ &CGLS &0.661334 &234& 8.5041e-11 &PBiCG&0.321293   &26&  7.8704e-12 \\
$c_{1}=1 $&MCG&0.508388   &231& 9.1198e-11 &PBiCR &0.435445 &24 &7.5118e-11\\
$c_{2}=2$ & & & & &PGMRES &0.885821 &22 &7.6295e-11\\
$c_{3}=3 $ &TLB& 0.500843  &53& 7.0480e-12  &PTLB &0.360964 &22 &1.1481e-12\\
$ $ &TBiCOR& 0.275404  &48& 7.3762e-11 &PTBiCOR &0.244606 &20 &5.4227e-11 \\
 &TCORS& 0.229283 &28& 1.0059e-12  &PTCORS &0.146302 &12 & 1.2266e-11  \\ \hline
$v=0.01$ &CGLS &0.658699 &240& 8.7261e-11  &PBiCG &0.321684 &39 &9.3669e-11\\
$c_{1}=1$ &MCG&0.517902  &236& 9.5783e-11 &PBiCR &0.356874 &38 &9.4120e-11\\
$c_{2}=2$ & & & & &PGMRES &2.472089 &36 &3.8573e-11\\
$c_{3}=3 $ &TLB&0.531822  &55& 6.9508e-11  &PTLB &0.491639 &29 &8.5332e-12\\
$ $  &TBiCOR&0.316190  &54& 5.6945e-11 &PTBiCOR &0.268896 &28 &8.7378e-11\\
$ $ &TCORS&0.203383  &30& 6.8170e-12 &PTCORS &0.164989 &16 &2.5240e-12   \\  \hline
  \end{tabular}}
\end{center}
}
\end{table}

We consider the case when $N=3$ and $p=10$. The right-hand side $\mathcal{D}$ is constructed by \eqref{eq1.1} with the exact solution $\mathcal{X}^{*}$ of \eqref{eq1.3} produced by the MATLAB commend $tenones(10,10,10)$ in \cite{WG12}.

\begin{figure}[htbp]
  \begin{center}
    \includegraphics[scale=0.5]{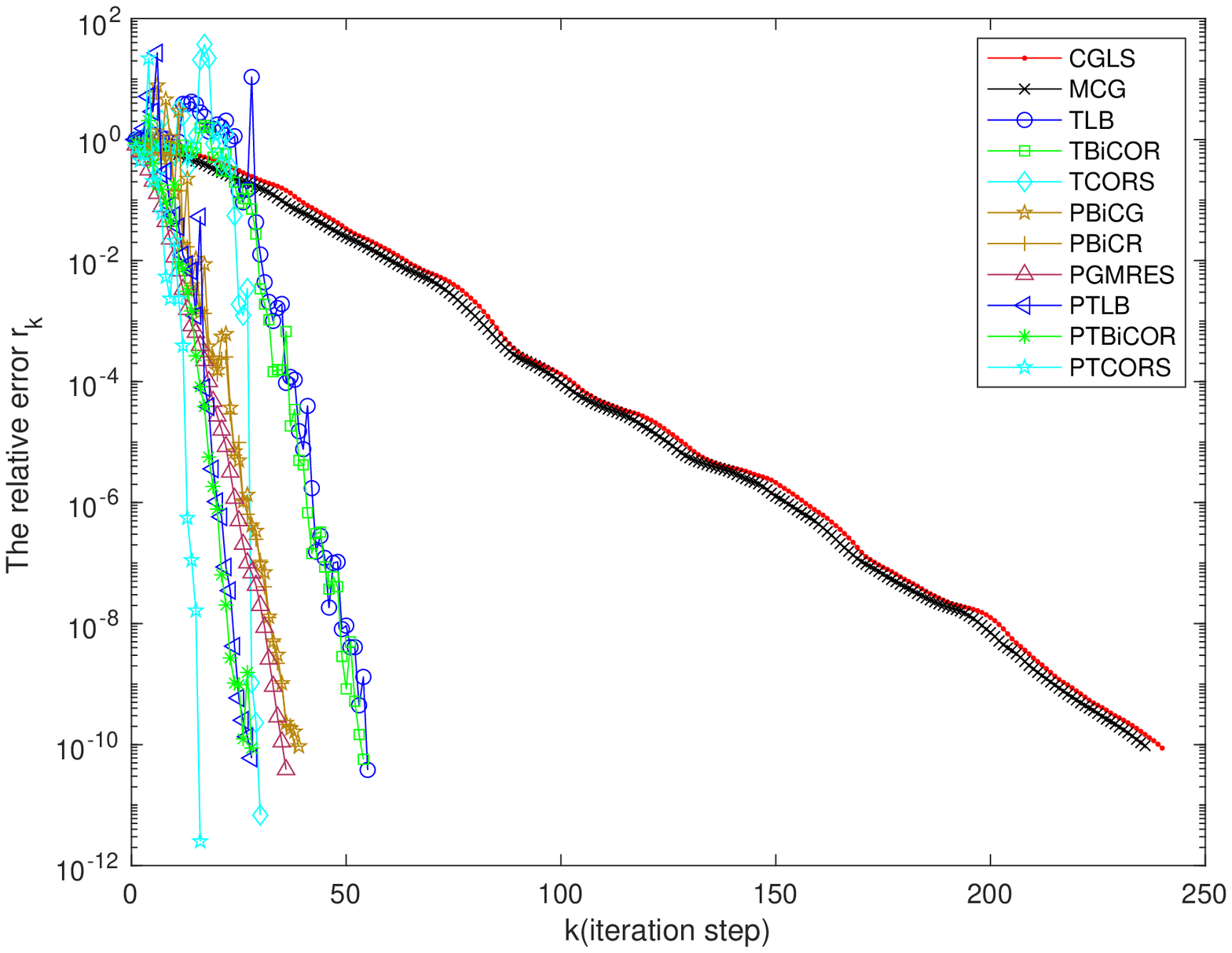}
    \caption{Plot of $r_{k}$ for Example \ref{ex4.2} when $v=0.01,c_{1}=1,c_{2}=2,c_{3}=3$.}\label{figb}
  \end{center}
\end{figure}

Let the initial solution $\mathcal{X}_{0}$ be a tensor with each element being zero. Algorithms \ref{alg3.2}-\ref{alg5} are used to solve \eqref{eq1.1} with $A_{i}$ given in \eqref{eq5.5}. These methods are compared with CGLS \cite{HM20}, MCG \cite{LM20}, PGMRES in \cite{CL12}, PBiCG and PBiCR in \cite{ZW21}.

Table \ref{tab:foo} displays the running time, total iteration number (TIN) and relative error of different method with different parameters $v=1,0.1,0.001$ and $c_i$.
Figure \ref{figb} shows the convergence of the relative error $r_{k}$ for each method with the parameters $v=0.01$, $c_{1}=1$, $c_{2}=2$ and $c_{3}=3$.

Table \ref{tab:foo} shows that, when the stop criterion is satisfied, preconditioned Algorithms \ref{alg6}-\ref{alg7} require less CPU time and iterations than Algorithms \ref{alg3.2}-\ref{alg3.5}, PGMRES, PBiCG and PBiCR. In most cases Algorithms \ref{alg3.2}-\ref{alg3.5} requires much less CPU time but more iterations than PGMRES, PBiCG and PBiCR, and are better than CGLS \cite{HM20} and MCG \cite{LM20} both in CPU time and the number of iterations. Algorithm \ref{alg7} requires the minimal CPU time and iterations among all methods.
Figure \ref{figb} shows similar results to that in Figure \ref{figa}.

\begin{example}\label{ex4.3}
We consider the Sylvester tensor equation \eqref{eq1.1} with the coefficient matrices $\textbf{A}_{i}, i=1,2,3$, which comes from the discretization of the operator
\begin{equation}
Lu:=\triangle u-e^{xy}\frac{\partial u}{\partial x}+sin(xy)\frac{\partial u}{\partial y}+y^2-x^2
\end{equation}
on the unit square $[0,1]\times[0,1]$ with homogeneous Dirichlet boundary conditions. We use the MATLAB command \textit{fdm\_2d\_matrix} in the Lyapack package \cite{T} to generate matrices $A_i$:
\begin{align}
\textbf{A}_{i}=fdm\_2d\_matrix(I_i,e^{xy},sin(xy),y^2-x^2),\label{eq5.6},
\end{align}
where $I_i=1+i, i=1,2,3$.
We construct $\mathcal{D}$ by \eqref{eq1.1} with the exact solution $\mathcal{X}^{*}$ of \eqref{eq1.3} produced by the MATLAB commend $tenones(4,9,16)$ in \cite{WG12}.
\end{example}

\begin{figure}[htbp]
  \begin{center}
    \includegraphics[scale=0.5]{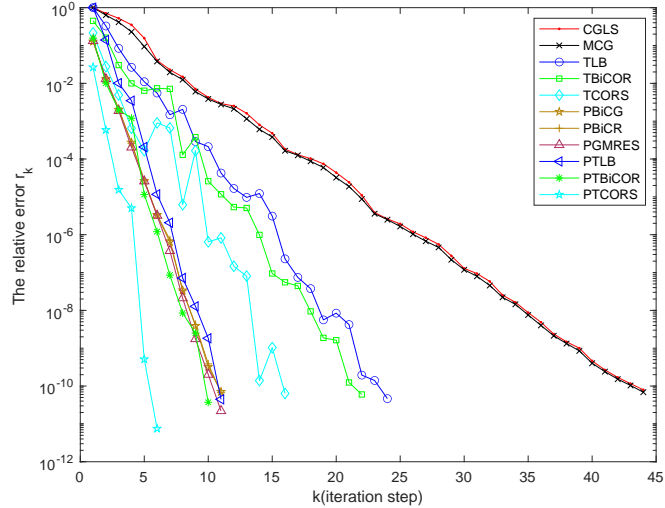}
    \caption{Plot of the relative error $r_{k}$ for Example \ref{ex4.3}.}\label{figc}
  \end{center}
\end{figure}

The initial solution $\mathcal{X}_{0}$ is selected as zero tensor. Algorithms \ref{alg3.2}-\ref{alg7} are used to solve \eqref{eq1.1} with $\textbf{A}_{i}$ given in \eqref{eq5.6}. These methods are compared with CGLS \cite{HM20}, MCG \cite{LM20}, PGMRES in \cite{CL12}, PBiCG and PBiCR in \cite{ZW21}.
Figure \ref{figc} shows that Algorithm \ref{alg7} converges fastest among all methods and Algorithm \ref{alg7} converges the second fastest among all methods, which are very similar to those in Figures \ref{figa} and \ref{figb}.

\section{Conclusion}\label{sec7}
This paper first presents a tensor Lanczos $\mathcal{L}$-Biorthogonalization (TLB) algorithm for solving the Sylvester tensor equation \eqref{eq1.1} based on the Lanczos $\mathcal{L}$-Biorthogonalization procedure. Then two improved methods based on the TLB algorithm are developed. The one is the biconjugate $\mathcal{L}$-orthogonal residual algorithm in tensor form (TBiCOR). The other is the conjugate $\mathcal{L}$-orthogonal residual squared algorithm in tensor form (TCORS). The preconditioner based on the nearest Kronecker product (NKP) are used to accelerate the TBiCOR and TCORS algorithms, thus we present preconditioned a preconditioned TBiCOR method and a preconditioned TCORS method. The convergence of these proposed algorithms are proved. Numerical examples show the advantage of  the  preconditioned TBiCOR and TCORS methods.

\section {Acknowledgments}\label{sec8}
The authors would like to thank the referees for their helpful comments which form the present version of this paper. The preconditioned methods are added according to one comment. Research by G.H. was supported in part by Application Fundamentals Foundation of STD of Sichuan (2020YJ0366) and Key Laboratory of bridge nondestructive testing and engineering calculation Open fund projects (2020QZJ03), and research by F.Y. was partially supported by NNSF (11501392) and SUSE (2019RC09).

\end{document}